\newtheorem{lemma}{Lemma}[section]
\newtheorem{prop}{Proposition}[section]
\newtheorem{corollary}{Corollary}[section]
\newtheorem{hypothesis}{Hypothesis}
\newcommand{\bfm}[1]{\mbox{\boldmath ${#1}$}}
\newcommand{\beqa}{\begin{eqnarray}}
\newcommand{\eeqa}[1]{\label{#1}\end{eqnarray}}
\newcommand{\beq}{\begin{equation}}
\newcommand{\eeq}[1]{\label{#1}\end{equation}}
\newcommand{\Gd}{\delta}
\newcommand{\Ge}{\epsilon}
\newcommand{\BGv}{\bfm\nu}
\newcommand{\re}{\textrm{Re}}
\newcommand{\paren}[1]{\left ( #1\right )}
\def\Bx{{\bf x}}
\def\By{{\bf y}}
\def\Bz{{\bf z}}
\def\B0{{\bf 0}}
\def \RR {{\mathbb R}}
\def \ba {\begin{array}}
\def \ea {\end{array}}
\newtheorem {Thm} {Theorem} [section]
\newtheorem {Adef} [Thm] {Definition}
\newtheorem {Arem} [Thm] {Remark}
\newtheorem {Aexa} [Thm] {Example}
\newtheorem {Anot} [Thm] {Notation}
\def \refe #1.{(\ref{#1})}
\def \reff #1.{figure~\ref{#1}}
\def \refs #1.{section~\ref{#1}}
\def \refss #1.{subsection~\ref{#1}}
\def \refD #1.{Definition~\ref{#1}}
\def \refT #1.{Theorem~\ref{#1}}
\def \refL #1.{Lemma~\ref{#1}}
\def \refC #1.{Corollary~\ref{#1}}
\def \refP #1.{Proposition~\ref{#1}}
\def \refR #1.{Remark~\ref{#1}}
\def \refE #1.{Example~\ref{#1}}
\def \refN #1.{Notation~\ref{#1}}
\newif\ifPDF
\title{Sensitivity analysis for active control of the Helmholtz equation}
\author[1]{Mark Hubenthal}
\author[1]{Daniel Onofrei}		      
\affil[1]{Department of Mathematics, University of Houston, Houston, Texas 77004}
\begin{document}
\maketitle

\begin{abstract}
The results in \cite{O2} (see \cite{O1} for the quasistatics regime) consider the Helmholtz equation with fixed frequency $k$ and, in particular imply that, for $k$ outside a discrete set of resonant frequencies and given a source region $D_a\subset \RR^d$ ($d=\overline{2,3}$) and $u_0$, a solution of the homogeneous scalar Helmholtz equation in a set containing the control region $D_c\subset \RR^d$, there exists an infinite class of boundary data on $\partial D_a$ so that the radiating solution to the corresponding exterior scalar Helmholtz problem in $\RR^d\setminus D_a$ will closely approximate $u_0$ in $D_c$. Moreover, it will have vanishingly small values beyond a certain large enough ``far-field" radius $R$ (see Figure \ref{fig:mainsetup} for a geometric description).

In this paper we study the minimal energy solution of the above problem (e.g. the solution obtained by using Tikhonov regularization with the Morozov discrepancy principle) and perform a detailed sensitivity analysis. In this regard we discuss the stability of the the minimal energy solution with respect to measurement errors as well as the feasibility of the active scheme (power budget and accuracy) depending on: the mutual distances between the antenna, control region and far field radius $R$, value of regularization parameter, frequency, location of the source.
\end{abstract}

\section{Introduction}
During recent years, there has been a growing interest in the development of feasible strategies for the control of acoustic and electromagnetic fields with one possible application being the construction of robust schemes for sonar or radar cloaking.

One main approach controls fields in the regions of interest by changing the material properties of the medium in certain surrounding regions (\cite{Chan3,Chan2,Cummer,Green2,Green1,Gunther-pr,Pendry} and references therein). Several alternative techniques are proposed in the literature (other than transformation optics strategies) such as: plasmonic designs (see \cite{Alu} and references therein), strategies based on anomalous resonance phenomena (see \cite{Mil1,Mil3,Mil2}), conformal mapping techniques (see \cite{Ulf2,Ulf1}), and complementary media strategies (see \cite{Chan}).

In the applied community, active designs for the manipulation of fields appear to have occurred initially in the context of low-frequency acoustics (or active noise cancellation). Especially notable are the pioneering works of Lueg \cite{Lueg} (feed-forward control of sound) and Olson \& May \cite{Olson-May} (feedback control of sound). The reviews \cite{Elliot,Fuller,Tsynkov,T1,Peake,Peterson}, provide detailed accounts of past and recent developments in acoustic active control.

In the context of cloaking, the {\bf{\textit{interior}}} strategy proposed in \cite{Miller} employs a continuous active layer on the boundary of the control region while the {\bf{\textit{exterior}}} scheme discussed in \cite{OMV4,OMV1,OMV2,OMV3} (see also \cite{CTchan-num}), uses a discrete number of active sources located in the exterior of the control region to manipulate the fields. The active exterior strategy for 2D quasistatics cloaking was introduced in \cite{OMV1}, and, based on \emph{a priori} information about the incoming field, the authors constructively described how one can create an almost zero field control region with very small effect in the far field. However, the proposed strategy did not work for control regions close to the active source. It ``cloaked" large objects only when they are far enough from the source region (see \cite{OMV4}) and was not adaptable to three space dimensions. The finite frequency case was studied in the last section of \cite{OMV1} and in \cite{OMV3} (see also \cite{OMV4} for a recent review) where three (or four in 3D) active sources were needed to create a zero field region in the interior of their convex hull, while creating a very small scattering effect in the far field. The broadband character of the proposed scheme was numerically observed in \cite{OMV2}. All the above results were obtained assuming large amplitude and highly oscillatory currents on the active source regions. In this regard, in \cite{Norris} (see also \cite{Devaney0,Miller}) the authors presented theoretical and numerical evidence that increasing the number of sources will decrease the power needed on each source and thus increase the feasibility of the scheme. Experimental designs and testing of active cloaking schemes in various regimes are reported in \cite{Du,Ma,Eleftheriades1,Eleftheriades2}.

In a recent development in \cite{O1}, a general analytical approach based on the theory of boundary layer potentials is proposed for the active control problem in the quasi-static regime.  By using the same integral equation approach, in \cite{O2} we extended the results presented in \cite{O1} to the active control problem for the exterior scalar Helmholtz equation. In particular, we characterized an infinite class of boundary functions on the source boundary $\partial D_a$ so that we achieve the desired manipulation effects in several mutually disjoint exterior regions. The method is novel in the sense that instead of using microstructures, exterior active sources modeled with the help of the above boundary controls are employed for the desired control effects. Such exterior active sources can represent velocity potential, pressure or currents.

In the current paper we study the active control problem in the context of cloaking, where one antenna $D_a$ protects a given control region $D_c$ from far field interrogation on $\partial B_{R}(\mathbf{0})$, with $R \gg 1$ (see Figure \ref{fig:mainsetup}). We make use of the results in \cite{O2} and present a detailed sensitivity and feasibility study for the minimal norm solution of the problem.

The paper is organized as follows: In Section \ref{sec:background} we recall the general result obtained in \cite{O2} in the context of exterior active cloaking. In Section \ref{sec:stability} we present an $L^2$ conditional stability result for the minimal norm solution with respect to measurement errors of the incoming field. In Section \ref{sec:numerics} we present the numerical details of the Tikhonov regularization algorithm with the Morozov discrepancy principle for the computation of the minimal norm solution of the exterior active cloaking problem in two dimensions. We will numerically observe the fact that the scheme requires large antenna powers in the far field and we will provide numerical support for our theoretical stability results. An important part of this section will be focused on the sensitivity analysis, where we will study: the dependence of the control results as a function of mutual distances between the antenna, control region and far field region; and the broadband character of our scheme in the near field region. Finally, in Section \ref{sec:conclusions} we highlight the main results of the paper and discuss current and future challenges and extensions of our research.	

\section{Background}
\label{sec:background}
\begin{figure}
\centering
\def \svgwidth{0.4\linewidth}
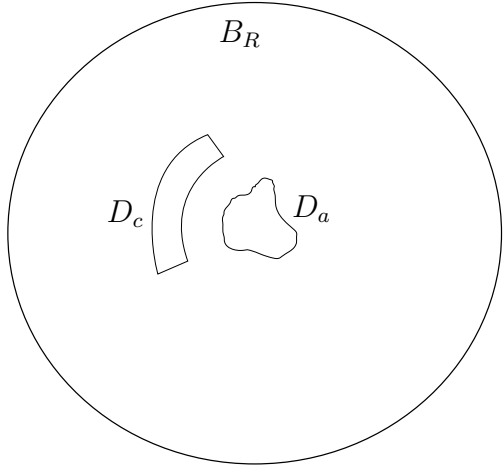
\caption{An antenna defined by $\partial D_{a}$ with a control region $D_{c}$ and far field region $B_R(\B0)$. }\label{fig:mainsetup}
\end{figure}
In this section we will recall the main result regarding the active exterior control problem for the Helmholtz equation obtained in \cite{O2}. We will focus only on the case where one active external source (antenna) $D_a$ protects a control region $D_c$ from an interrogating far field and maintains an overall small signature beyond a disk of large enough radius $R$.

The general setup for this question will be as follows. Let $B_{R} \subset \mathbb{R}^{d}$ be the ball of radius $R > 0$. We assume $\B0\in D_{a} \subset  B_{R}$ is the region inside a single antenna with sufficiently smooth boundary $\partial D_{a}$. We also let $D_{c} \Subset B_{R}$ be the control region, which is assumed to satisfy $\overline{D_{c}} \cap \overline{D_{a}} = \emptyset$ (see Figure \ref{fig:mainsetup}). The numerical simulations in the current work are performed for the two dimensional case but the methods are adaptable to the three dimensional setting as well.
Consider the function space
\begin{equation*}
\Xi = L^{2}(\partial D_{c}) \times L^{2}(\partial B_{R}),
\end{equation*}
endowed with the scalar product
\begin{equation}
(\phi,\psi)_{\Xi} = \int_{\partial D_{c}}\phi_{1}(\mathbf{y})\overline{\psi}_{1}(\mathbf{y})\,dS_{\mathbf{y}} + \int_{\partial B_{R}} \phi_{2}(\mathbf{y})\overline{\psi}_{2}(\mathbf{y})\,dS_{\mathbf{y}},
\end{equation}
which is a Hilbert space. For the remainder of the paper we will assume that every $L^2$ space of complex valued functions will be endowed with the usual inner product. As in \cite{O2} consider $K: L^{2}(\partial D_{a}) \to \Xi$, the double layer potential operator restricted to $\partial D_{c}$ and $\partial B_{R}$, respectively, defined by
\begin{equation}
\label{EQ:K}
K\phi(\Bx,\Bz) =  (K_{1}\phi(\Bx), K_{2}\phi(\Bz)), \quad \phi \in L^{2}(\partial D_{a}),
\end{equation}
where
\begin{align}
K_1:L^{2}(\partial D_{a})\rightarrow L^{2}(\partial D_{c}), \, K_1\phi(\Bx) & = \int_{\partial D_{a}}\phi(\By)\frac{\partial \Phi(\Bx,\By)}{\partial \BGv_{\By}}ds_{\By}, \mbox{ for }\Bx\in
\partial D_c, \notag\\
K_2:L^{2}(\partial D_{a})\rightarrow L^{2}(\partial B_R), \, K_2\phi(\Bz) & = \int_{\partial D_{a}}\phi(\By)\frac{\partial \Phi(\Bz,\By)}{\partial \BGv_{\By}}ds_{\By}, \mbox{ for }\Bz\in
\partial B_R(\B0). \label{8}
\end{align}
Here $\Phi(\Bx, \By)$ represents the fundamental solution of the relevant Helmholtz operator, i.e.,
\begin{equation}
\label{9}
\Phi(\Bx,\By)=\vspace{0.15cm}\left\{\begin{array}{ll}
\vspace{0.15cm}\displaystyle\frac{e^{ik|\Bx-\By|}}{4\pi|\Bx-\By|}, \mbox{ for } d=3\vspace{0.15cm}\\
\frac{i}{4}H_0^{(1)}(k|\Bx-\By|) , \mbox{ for } d=2\end{array}\right.
\end{equation}
with $H_{0}^{(1)} = J_{0} + iY_{0}$ representing the Hankel function of first type. Note that in \eqref{8} the integrals are to be understood as singular integrals defined through an operator extension from $C(\partial D_a)$. We will also consider $k$ such that
\begin{eqnarray}
\label{ass-lambda-0}
                 && \mathbf{1)} \, -k^2 \mbox{ is not a Neumann eigenvalue for the Laplace operator in $D_a$ or $B_R(\B0)$},\nonumber\\
                 && \mathbf{2)} \, -k^2 \mbox{ is not a Dirichlet eigenvalue for the Laplace operator in $D_c$}.
\end{eqnarray}
As in \cite{O2} we introduce the adjoint operator $K^{*}: \Xi \to L^{2}(\partial D_{a})$, which can be shown to satisfy
\begin{equation}
K^{*}\psi(\mathbf{x}) = \int_{\partial D_{c}} \psi_{1}(\mathbf{y}) \overline{\frac{\partial \Phi(\mathbf{y},\mathbf{x})}{\partial \nu_{\mathbf{x}}}}\,dS_{\mathbf{y}} + \int_{\partial B_{R}} \psi_{2}(\mathbf{y}) \overline{\frac{\partial \Phi(\mathbf{y},\mathbf{x})}{\partial \nu_{\mathbf{x}}}}\, dS_{\mathbf{y}}, \quad \mathbf{x} \in \partial D_{a}. \label{eq:dlpotentialadjoint}
\end{equation}

This paper proposes a sensitivity study for the following problem: Let $V\Subset D_c$ and $R'>R$. For a fixed wave number $k > 0$ and fixed $0 < \mu \ll 1$, find a function $h \in C(\partial D_{a})$ such that there exists $u \in C^{2}(\RR^{n} \setminus \overline{D_{a}}) \cap C^{1}(\RR^{n} \setminus D_{a})$ solving
\begin{equation}
\left\{ \begin{array}{rl}
(\Delta + k^{2}) u(\mathbf{x}) & = 0 \quad \mathbf{x} \in \RR^{n} \setminus \overline{D_{a}}\\
u & = h \quad \textrm{ on $\partial D_{a}$}\\
\|u - f_{1}\|_{C(\overline{V})} & = \mathcal{O}(\mu)\mbox { and }\|u\|_{C(\RR^{n} \setminus B_{R'}(\mathbf{0}))} = \mathcal{O}(\mu),
\end{array}
\right. \label{eq:inverseproblem}
\end{equation}
where $f_{1}$ is a solution of the Helmholtz equation in a neighborhood of the control region $D_{c}$. In fact, by using the operator $K$ and regularity arguments it is shown in \cite{O2} that a class of solutions for problem \eqref{eq:inverseproblem} can be obtained by considering the following problem: for a fixed wave number $k>0$ satisfying conditions \eqref{ass-lambda-0}, a given function $f=(f_1,0) \in \Xi$ and $\mu > 0$, find a density function $\phi \in C(\partial D_{a})$ such that
\begin{equation}
\|K\phi - f\|_{\Xi} \leq \mu. \label{eq:controlinequality}
\end{equation}

Problem \eqref{eq:controlinequality} is in fact a Fredholm integral equation of the first kind, and it was studied in a very general setting in \cite{O2}. There the authors proved that the bounded and compact operator $K$ is also one-to-one and has a dense (but not closed) range, thus proving the existence of a class of solutions for \eqref{eq:controlinequality} (and thus for \eqref{eq:inverseproblem}). However, the fact that $K$ is compact and that its range is not closed also implies that problem \eqref{eq:controlinequality} is ill-posed. By using regularization, one can approximate a solution to problem \eqref{eq:controlinequality} with an arbitrary level of accuracy $\mu \ll 1$. There are several methods known in the literature, but we will use in this paper the Tikhonov regularization method \cite{reg1,reg2}. This method, when applied to the operator $K:L^2(\partial D_a)\rightarrow \Xi$, proposes a solution $\phi_{\alpha}\in C(\partial D_{a})$ of the form
\begin{equation}
\label{Tikhonov}
\phi_{\alpha}=(\alpha I+K^*K)^{-1}K^*f,  \textrm{ for } 0<\alpha \ll 1,
\end{equation}
where $\alpha$ is a suitably chosen regularization parameter.

It is known that $\Vert K\phi_\alpha-f\Vert_\Xi\rightarrow 0$ as $\alpha\rightarrow 0$, (see \cite{Kirsch-Book96}, Theorem 2.16), but the optimal choice of $\alpha$ is an essential step in designing a feasible method (e.g., finding a minimal norm solution), and there are various modalities to do this. In this paper we will use the Morozov discrepancy principle associated to the following weighted residual:
\begin{equation}
\label{disc-fun}
E(\phi,h)= \displaystyle\frac{1}{\|h_{1}\|_{L^{2}(\partial D_{c})}^{2}}\| K_1\phi  - h_{1}\|_{L^{2}(\partial D_{c})}^{2} + \frac{1}{2\pi R} \|K_2\phi \|_{L^{2}(\partial B_{R})}^{2},
\end{equation}
for every given $h=(h_1,0)\in \Xi$. 
The reasoning behind using the weighted residual discrepancy functional defined at \ref{disc-fun} is as follows. Due to the asymptotic behavior of $\frac{\partial \Phi(\mathbf{x},\, \mathbf{y})}{\partial \nu_{\mathbf{y}}} = \mathcal{O}(|\mathbf{x}-\mathbf{y}|^{-1/2})$ as $|\mathbf{x}-\mathbf{y}| \to \infty$, we have that given a fixed density $\phi$, $\|K\phi\|_{L^{2}(\partial B_{R})} = \mathcal{O}(1)$ as $R \to \infty$. In other words, using the space $L^{2}(\partial B_{R})$ with the standard surface measure is not really suited to the decay properties of double layer potential solutions, because the decay of the normal derivative $\partial_{\nu}\Phi$ is too weak. Similarly, we use the relative norm
\begin{equation}
\label{eq:F1}
\frac{\|K_1\phi - h_{1}\|_{L^{2}(\partial D_{c})}}{\|h_{1}\|_{L^{2}(\partial D_{c})}}
\end{equation}
on $\partial D_{c}$ because this is a useful quantity for determining how good the control is, regardless of the norm of $h_{1}$. Thus our procedure for finding an approximate solution for problem \eqref{eq:controlinequality} is to first make use of the Tikhonov regularization for the operator  $K:L^2(\partial D_a)\rightarrow \Xi$ as described in \eqref{Tikhonov} to obtain $\phi_\alpha$ and then apply the Morozov's discrepancy principle for the unique choice of $\alpha$ (\cite{Kress-Book99}), i.e. such that
\begin{equation}
\label{Morozov0}
E(\phi_{\alpha}, f)=\delta^2,
\end{equation}
with $\displaystyle\delta^2\leq \mu^2\min\left\{\frac{1}{2\|f_{1}\|^2_{L^{2}(\partial D_{c})}}, \frac{1}{4\pi R}\right\}$.

In what follows, we will account for noise and measurement errors and will consider \eqref{Morozov0} with $f=(f_1,0)\in \Xi$ replaced by $f_\epsilon=(f_{\epsilon,1}, f_{\epsilon,2})\in \Xi$, given by
\begin{equation}
\label{random-f}
f_{\epsilon} = (f_{1} + \epsilon s, \, 0)\in \Xi,
\end{equation}
where $s\in L^{2}(\partial D_{c})$  is a random perturbation with $\|s\|_{L^{2}(\partial D_{c})} \leq 2 \|f_{1}\|_{L^{2}(\partial D_{c})}$ and $f_1$ is a solution of the Helmholtz equation in a neighborhood of the control region $D_{c}$. We mention that in the numerical experiments of Section \ref{sec:numerics}, $f_{1}$ denotes the $k$ frequency component of the far field of a far field observer. Note that this assumption about the interrogating signal ensures that $f_1$ is a solution of the Helmholtz equation in $B_R$. In the noisy case (i.e. when $f$ is replaced by $f_\epsilon$) equation \eqref{Morozov0} becomes
\begin{equation}
\label{Morozov}
E(\phi_{\alpha}, f_\epsilon)=\delta^2,
\end{equation}
where $\phi_{\alpha}=(\alpha I+K^*K)^{-1}K^*f_\epsilon$ is the Tikhonov regularization solution. From the definition of $E$ and classical results, \cite{Kirsch-Book96,Kress-Book99}, it follows that \eqref{Morozov} admits at least a solution $\alpha$. Moreover, as we will discuss in Section \ref{sec:stability}, motivated by numerical evidence, we formulate the hypothesis that there exists $\epsilon_0>0$ such that for each $\epsilon\in (0,\epsilon_0)$, problem \eqref{Morozov} has a unique solution $\alpha(\epsilon)$ which uniquely defines a differentiable function $\epsilon\rightsquigarrow\alpha(\epsilon)$. We will study the minimal norm solution uniquely determined by \eqref{Morozov}, discuss its stability for given noisy data in $\Xi$ and, in the case of data corresponding to a point source, analyze its sensitivity with respect to parameters such as: mutual distances between $D_a$, $D_c$ and $B_R(\B0)$; wave number $k$; and the location of the point source.

\section{Stability estimate for the Tikhonov regularization}
\label{sec:stability}
In this section we present analytical and numerical arguments which indicate the stability of the minimum norm solution $\phi_{\alpha}$ with respect to noise level $\epsilon$ for a given fixed discrepancy level $\delta$. Next, we present below Lemma \ref{lemma:phif1constant} which will provide bounds for $\|f_{1}\|_{L^{2}(\partial D_{C})}$ and $\alpha$ in terms of the operatorial norm of $K_{1}^{*}$.

\begin{lemma} Let $0<\delta<\frac{1}{\sqrt{2}}$ and $z=(z_1,0) \in \Xi'$ with $z_1\neq 0$. Consider the Tikhonov regularization solution $\phi_{\alpha}=(\alpha I + K^{*}K)^{-1}K^*z \in C(\partial D_a)$, with $\alpha$ such that $\|K\phi_{\alpha} - z\|_{\Xi'} \leq \delta$. Then we have
\begin{align}
\|z_{1}\|_{L^2(\partial D_c)} & \leq 4\|K_1^*\|_{\mathcal O}\|\phi_{\alpha}\|_{L^{2}(\partial D_{a})},\label{lb-phi}\\
\alpha & \leq 4\delta \Vert K_1^*\Vert^2_{\mathcal O},\label{ub-alpha}
\end{align}
where $K_1^*$ is the adjoint operator for $K_1$ defined by \eqref{8} and $\|\cdot\|_{\mathcal O}$ denotes the operatorial norm.
 \label{lemma:phif1constant}
\end{lemma}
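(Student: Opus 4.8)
The plan is to derive the two estimates in sequence: extract \eqref{lb-phi} from the discrepancy condition alone, and then feed it into the normal equation defining the Tikhonov solution to obtain \eqref{ub-alpha}. Throughout I read $\Xi'$ as the weighted space whose squared norm reproduces the discrepancy functional $E$ of \eqref{disc-fun}, i.e.\ with weights $\|z_1\|_{L^2(\partial D_c)}^{-2}$ on $L^2(\partial D_c)$ and $(2\pi R)^{-1}$ on $L^2(\partial B_R)$, so that $\|K\phi_\alpha-z\|_{\Xi'}^2 = E(\phi_\alpha,z)$.

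For \eqref{lb-phi} I would discard the far-field component of the residual: from $\|K\phi_\alpha-z\|_{\Xi'}\le\delta$, keeping only the $\partial D_c$ term gives
\[
\|K_1\phi_\alpha - z_1\|_{L^2(\partial D_c)} \le \delta\,\|z_1\|_{L^2(\partial D_c)}.
\]
A reverse triangle inequality then yields $(1-\delta)\|z_1\| \le \|K_1\phi_\alpha\| \le \|K_1\|_{\mathcal O}\|\phi_\alpha\| = \|K_1^*\|_{\mathcal O}\|\phi_\alpha\|$, where I use that an operator and its adjoint have equal operator norm. Since $0<\delta<\tfrac{1}{\sqrt2}$ forces $(1-\delta)^{-1}<2+\sqrt2<4$, this is exactly \eqref{lb-phi}. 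A useful by-product is that $z_1\neq0$ and this inequality force $\|\phi_\alpha\|_{L^2(\partial D_a)}>0$, which legitimizes the cancellations below.

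For \eqref{ub-alpha} I would work from the normal equation $\alpha\phi_\alpha = K^*(z-K\phi_\alpha)$, with $K^*$ the (unweighted) adjoint \eqref{eq:dlpotentialadjoint}. Testing against $\phi_\alpha$ in $L^2(\partial D_a)$ and writing $r=z-K\phi_\alpha$, so that $(K^*r,\phi_\alpha)=(r,K\phi_\alpha)_\Xi$ and $r_2=-K_2\phi_\alpha$, gives
\[
\alpha\|\phi_\alpha\|^2 = \re\,(z_1-K_1\phi_\alpha,\,K_1\phi_\alpha)_{L^2(\partial D_c)} - \|K_2\phi_\alpha\|_{L^2(\partial B_R)}^2.
\]
The far-field term enters with a negative sign and may simply be dropped; Cauchy--Schwarz on what remains gives $\alpha\|\phi_\alpha\|^2 \le \|z_1-K_1\phi_\alpha\|\,\|K_1\phi_\alpha\|$. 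Bounding $\|K_1\phi_\alpha\|\le\|K_1^*\|_{\mathcal O}\|\phi_\alpha\|$ and cancelling one factor of $\|\phi_\alpha\|$ leaves $\alpha\|\phi_\alpha\| \le \delta\|z_1\|\,\|K_1^*\|_{\mathcal O}$; substituting \eqref{lb-phi} and cancelling the second factor of $\|\phi_\alpha\|$ produces \eqref{ub-alpha}.

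The genuinely routine parts are the identity $\|K_1\|_{\mathcal O}=\|K_1^*\|_{\mathcal O}$ and the constant bookkeeping $(1-\delta)^{-1}<4$. The one step demanding care is the second estimate: it must be arranged so that none of $\|z_1\|$, $\|K_2^*\|_{\mathcal O}$, or $R$ survives in the final bound. This is precisely what testing the normal equation \emph{against $\phi_\alpha$} accomplishes — estimating $\|K^*r\|$ directly would leave behind the far-field operator norm and the weight $(2\pi R)^{-1}$, whereas against $\phi_\alpha$ the weighted far-field contribution drops by sign, and the two successive cancellations of $\|\phi_\alpha\|$ convert the stray $\|z_1\|$ coming from the residual bound into the clean factor $\|K_1^*\|_{\mathcal O}^2$.
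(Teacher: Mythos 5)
Your proof is correct and follows essentially the same route as the paper: extract the $\partial D_c$ residual bound from the discrepancy condition to get \eqref{lb-phi}, then test the normal equation against $\phi_\alpha$, drop the nonnegative far-field term $\|K_2\phi_\alpha\|^2_{L^2(\partial B_R)}$, and combine Cauchy--Schwarz with \eqref{lb-phi} to get \eqref{ub-alpha}. The only cosmetic difference is in the first estimate, where you use the reverse triangle inequality and the bound $(1-\delta)^{-1}<2+\sqrt{2}<4$, while the paper expands $\|K_1\phi_\alpha-z_1\|^2$ and uses $\tfrac{2}{1-\delta^2}\leq 4$ (which is where the hypothesis $\delta<\tfrac{1}{\sqrt{2}}$ is used exactly); both are valid.
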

\begin{proof} We will start with the proof of \eqref{lb-phi}. Note that since $E(\phi_{\alpha}, z)=\delta^{2}$, we have
\begin{equation}
\label{5.17}
\|K_{1}\phi_{\alpha} - z_{1}\|_{L^{2}(\partial D_{c})}^{2} \leq \delta^{2}\|z_{1}\|_{L^2(\partial D_c)}^{2}.
\end{equation}
From \eqref{5.17} we obtain
\begin{equation}
\|K_{1}\phi_{\alpha}\|_{L^2(\partial D_c)}^{2} - 2 \, \re(K_{1}\phi_{\alpha}, \, z_{1})_{L^2(\partial D_c)} + \|z_{1}\|_{L^2(\partial D_c)}^{2} \leq \delta^{2}\|z_{1}\|_{L^2(\partial D_c)}^{2}, \end{equation}
and this implies
\begin{equation}
\|z_{1}\|_{L^2(\partial D_c)}^{2}(1 -\delta^{2}) \leq 2 \, \re(\phi_{\alpha}, \, K_{1}^{*}z_{1})_{L^2(\partial D_a)} \leq 2 \|\phi_{\alpha}\|_{L^2(\partial D_a)}\|K_{1}^{*} z_{1}\|_{L^2(\partial D_a)}. \label{pas1}
\end{equation}
Then \eqref{pas1} gives
\begin{align}
\|\phi_{\alpha}\|_{L^2(\partial D_a)} & \geq \frac{\|z_{1}\|_{L^2(\partial D_c)}^{2}(1 - \delta^{2})}{2 \|K_{1}^{*}z_{1}\|_{L^2(\partial D_a)}}
= \paren{\frac{1 - \delta^{2}}{2}}\paren{ \frac{\|z_{1}\|_{L^2(\partial D_c)}}{\|K_{1}^{*}z_{1}\|_{L^2(\partial D_a)}} }\|z_{1}\|_{L^2(\partial D_c)}\notag\\
& \geq \frac{1 - \delta^{2}}{2} \paren{\frac{\|z_{1}\|_{L^2(\partial D_c)}}{\|K_{1}^{*}\|_{\mathcal O}}}\notag\\
&\geq \frac{\|z_1\|_{L^2(\partial D_c)}}{4\|K_{1}^{*}\|_{\mathcal O}}.\label{margine}
\end{align}

Next we proceed towards proving \eqref{ub-alpha}. From the definition of $\phi_{\alpha}$ we have
\begin{eqnarray}
\alpha \phi_{\alpha} + K^{*}K\phi_{\alpha}&=& K^*z=K_1^*z_1,\nonumber\\
\alpha \phi_{\alpha}+K_2^{*}K_2\phi_{\alpha} &=& K_1^*(z_1-K_1\phi_{\alpha}).\label{5.18}
\end{eqnarray}
Here we have used from \eqref{8} and \eqref{eq:dlpotentialadjoint} that
\begin{eqnarray}
\label{adjunct-rel}
K^*\psi=K_1^*\psi_1+K^*_2\psi_2, \mbox{ for all }\psi\in\Xi,\nonumber\\
K^*Kv=K_1^*K_1v+K_2^*K_2v, \mbox{ for all }v\in L^2(\partial D_a).
\end{eqnarray}
Multiplying \eqref{5.18} with $\phi_{\alpha}$ in the sense of the usual scalar product in
$L^2(\partial D_a)$, we obtain
\begin{equation}
\label{rel11}
\alpha \|\phi_{\alpha}\|^2_{L^2(\partial D_a)} +\|K_2\phi_{\alpha}\|^2_{L^2(\partial D_a)}  = (K_1^*(z_1-K_1\phi_{\alpha}),\phi_{\alpha})_{L^2(\partial D_a)}.\end{equation}
Using \eqref{5.17}, \eqref{margine} and \eqref{adjunct-rel} in \eqref{rel11} we then have
\begin{equation*}\alpha \|\phi_{\alpha}\|_{L^2(\partial D_a)} \leq \delta\|K_1^*\|_{\mathcal O}\|z_1\|_{L^2(\partial D_c)}\Longrightarrow
\alpha\leq 4\delta\|K_1^*\|^2_{\mathcal O} \label{alfa}.
\end{equation*}
\end{proof}

Next, before presenting the main stability result of this section, i.e., Proposition \ref{theorem:stabilityestimate} below, we must understand the conditions on $\epsilon > 0$ under which \eqref{Morozov} admits a unique solution $\alpha(\epsilon)$ with the property that the resulting function $\epsilon\rightsquigarrow \alpha(\epsilon)$ is differentiable. For this, we consider the function $g:(0,\infty)\times(0,\infty)\rightarrow (0,\infty)$ defined by
\begin{equation}
g(\alpha, \epsilon) = E(\phi_\alpha, f_\epsilon)\label{eq:galphaeps},
\end{equation}
where $f_{\epsilon} \in \Xi$ was introduced in \eqref{random-f}, and $\phi_\alpha$ is the Tikhonov regularization solution introduced in \eqref{Morozov}. With this notation, \eqref{Morozov} can be rewritten as
\begin{equation}
\label{Morozov-g}
g(\alpha, \epsilon) = \delta^2,
\end{equation}
where $\delta$ is the desired fixed discrepancy level. By using classical results (e.g., \cite{Kirsch-Book96,Kress-Book99}) it can be observed that for every $\epsilon$, \eqref{Morozov-g} admits at least one solution in $(0,\infty)$ and that $g$ defined by \eqref{eq:galphaeps} is differentiable with respect to positive $\alpha$ and $\epsilon$, respectively. In fact, it follows from classical arguments that a maximum value of $\alpha$ for a given $\epsilon$ exists. This solution of \eqref{Morozov-g} corresponds to the $L^2$ minimal energy solution and we will further refer to it as the Morozov solution.

For the remainder of the paper, unless otherwise specified, $C$ will denote a generic constant which depends only on the operator $K$, $d_c=diam(D_c)$ and $d=dist(\partial D_c,\partial D_a)$. The next Proposition states a central stability result concerning the Morozov solution of \eqref{Morozov-g}. We have,

\begin{prop}
Let $0<\delta$ be as above, and $f_\epsilon$ and $f_1$ as defined in \eqref{random-f}. For every $\epsilon \geq 0$ consider $\phi_{\alpha_{\epsilon}}=(\alpha_\epsilon I + K^{*}K)^{-1}K^{*}f_{\epsilon}\in C(\partial D_a)$ with $\alpha_\epsilon=\alpha(\epsilon)$ the Morozov solution of \eqref{Morozov-g}. Then we have,
\begin{equation}
\displaystyle\frac{\| \phi_{\alpha_\epsilon} - \phi_{\alpha_0}\|_{L^{2}(\partial D_{a})}}{\|\phi_{\alpha_\Ge}\|_{L^{2}(\partial D_{a})}} \leq \displaystyle\frac{ \left|\displaystyle\frac{\alpha_\Ge}{\alpha_0} - 1\right| + \sqrt{ \left|\displaystyle\frac{\alpha_\Ge}{\alpha_0} -1 \right|^{2} + 16\displaystyle \frac{\epsilon\paren{ 2\delta + C\delta\epsilon +C\epsilon}}{\alpha_0}\|K^*_{1}\|_{\mathcal O}}}{2}.
\end{equation}\label{theorem:stabilityestimate}
\end{prop}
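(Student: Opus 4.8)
The plan is to reduce the whole statement to a single scalar quadratic inequality for the normalized error
\[
t := \frac{\|\phi_{\alpha_\epsilon} - \phi_{\alpha_0}\|_{L^{2}(\partial D_a)}}{\|\phi_{\alpha_\epsilon}\|_{L^{2}(\partial D_a)}},
\]
and then to read off the asserted bound as the positive root of that quadratic. First I would write the two normal equations defining the Morozov solutions, namely $(\alpha_\epsilon I + K^{*}K)\phi_{\alpha_\epsilon} = K^{*}f_\epsilon$ and $(\alpha_0 I + K^{*}K)\phi_{\alpha_0} = K^{*}f_0$, where $f_0=(f_1,0)$ and, by \eqref{random-f}, $f_\epsilon-f_0=(\epsilon s,0)$, so that $K^{*}(f_\epsilon-f_0)=\epsilon K_1^{*}s$ by \eqref{adjunct-rel}. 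Subtracting and grouping the $\alpha$-terms so as to retain $\phi_{\alpha_\epsilon}$ (the quantity in the normalization) gives the difference identity
\[
(\alpha_0 I + K^{*}K)(\phi_{\alpha_\epsilon}-\phi_{\alpha_0}) = \epsilon K_1^{*}s - (\alpha_\epsilon-\alpha_0)\phi_{\alpha_\epsilon}.
\]

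Next I would pair this identity with $\phi_{\alpha_\epsilon}-\phi_{\alpha_0}$ in $L^{2}(\partial D_a)$. Since $\alpha_0>0$, $\|K(\phi_{\alpha_\epsilon}-\phi_{\alpha_0})\|^{2}\ge 0$, and $\alpha_\epsilon,\alpha_0$ are real, taking real parts and using the adjoint relation $(K_1^{*}s,\,\cdot\,)=(s,K_1\,\cdot\,)$ together with Cauchy--Schwarz on the last term yields
\[
\alpha_0\|\phi_{\alpha_\epsilon}-\phi_{\alpha_0}\|^{2} \le \epsilon\,\bigl|(s,\,K_1(\phi_{\alpha_\epsilon}-\phi_{\alpha_0}))_{L^{2}(\partial D_c)}\bigr| + |\alpha_\epsilon-\alpha_0|\,\|\phi_{\alpha_\epsilon}\|\,\|\phi_{\alpha_\epsilon}-\phi_{\alpha_0}\|.
\]
Dividing by $\alpha_0$ immediately produces the coefficient $\bigl|\tfrac{\alpha_\epsilon}{\alpha_0}-1\bigr|$ in front of the cross term $\|\phi_{\alpha_\epsilon}\|\,\|\phi_{\alpha_\epsilon}-\phi_{\alpha_0}\|$, which is exactly the linear (``$b$'') term of the target quadratic.

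The heart of the argument is the estimate of the noise term. Here I would expand
\[
K_1(\phi_{\alpha_\epsilon}-\phi_{\alpha_0}) = (K_1\phi_{\alpha_\epsilon}-f_{\epsilon,1}) - (K_1\phi_{\alpha_0}-f_1) + \epsilon s,
\]
and invoke the two Morozov discrepancy identities $\|K_1\phi_{\alpha_\epsilon}-f_{\epsilon,1}\|\le \delta\|f_{\epsilon,1}\|$ and $\|K_1\phi_{\alpha_0}-f_1\|\le \delta\|f_1\|$, which follow from $E(\phi_{\alpha_\epsilon},f_\epsilon)=E(\phi_{\alpha_0},f_0)=\delta^{2}$ and the definition \eqref{disc-fun}, together with $\|f_{\epsilon,1}\|\le(1+2\epsilon)\|f_1\|$ and $\|s\|\le 2\|f_1\|$. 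The triangle inequality then gives a bound of the form $\|K_1(\phi_{\alpha_\epsilon}-\phi_{\alpha_0})\|\le (2\delta+C\delta\epsilon+C\epsilon)\|f_1\|$, the leading $2\delta$ coming from the two discrepancy terms and the $\epsilon$-corrections being swept into the generic constant $C$. Applying Cauchy--Schwarz to the noise term, bounding $\|s\|\le 2\|f_1\|$, and finally using Lemma \ref{lemma:phif1constant} (after noting $\|f_1\|$ and $\|f_{\epsilon,1}\|$ are comparable for small $\epsilon$) to replace the surviving data norms by $\|K_1^{*}\|_{\mathcal{O}}\|\phi_{\alpha_\epsilon}\|$, one converts the whole noise contribution into a term of the form $c\,\|\phi_{\alpha_\epsilon}\|^{2}$, where $c$ matches $\tfrac14$ of the quantity $16\,\epsilon(2\delta+C\delta\epsilon+C\epsilon)\alpha_0^{-1}\|K_1^{*}\|_{\mathcal{O}}$ under the square root (one power of the operator norm kept explicit, any remaining operator-norm factor absorbed into $C$).

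Combining the two estimates gives the quadratic inequality $\|\phi_{\alpha_\epsilon}-\phi_{\alpha_0}\|^{2}\le \bigl|\tfrac{\alpha_\epsilon}{\alpha_0}-1\bigr|\,\|\phi_{\alpha_\epsilon}\|\,\|\phi_{\alpha_\epsilon}-\phi_{\alpha_0}\|+c\,\|\phi_{\alpha_\epsilon}\|^{2}$; dividing by $\|\phi_{\alpha_\epsilon}\|^{2}$ this reads $t^{2}-\bigl|\tfrac{\alpha_\epsilon}{\alpha_0}-1\bigr|\,t-c\le 0$, and since $t\ge 0$ it must lie below the positive root, i.e. $t\le \tfrac12\bigl(\bigl|\tfrac{\alpha_\epsilon}{\alpha_0}-1\bigr|+\sqrt{\bigl|\tfrac{\alpha_\epsilon}{\alpha_0}-1\bigr|^{2}+4c}\bigr)$, which is precisely the claimed estimate. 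I expect the main obstacle to be the bookkeeping in the noise term: one must keep the leading factor $2\delta$ explicit while correctly absorbing all $\epsilon$-dependent constants (from $\|f_{\epsilon,1}\|\le(1+2\epsilon)\|f_1\|$, from $\|s\|\le 2\|f_1\|$, and from the two applications of Lemma \ref{lemma:phif1constant}) into $C$, and to justify that the hypotheses of Lemma \ref{lemma:phif1constant} hold uniformly over the relevant range of $\epsilon$ so that the data norms can be traded for $\|K_1^{*}\|_{\mathcal{O}}\|\phi_{\alpha_\epsilon}\|$.
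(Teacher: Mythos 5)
Your proposal follows essentially the same route as the paper's proof: subtract the two normal equations, pair the difference identity with $\phi_{\alpha_\epsilon}-\phi_{\alpha_0}$, discard the nonnegative $\|K(\cdot)\|_\Xi^2$ term, bound the noise contribution via the triangle inequality using the two Morozov discrepancy conditions, convert $\|f_1\|_{L^2(\partial D_c)}$ into $\|K_1^*\|_{\mathcal O}\|\phi_{\alpha_\epsilon}\|_{L^2(\partial D_a)}$ via Lemma \ref{lemma:phif1constant}, and solve the resulting scalar quadratic in the normalized error. Even the loose bookkeeping you flag (the power of $\|K_1^*\|_{\mathcal O}$ and the constant $16$ versus $64$ under the square root) mirrors a discrepancy already present between the paper's statement and the final display of its own proof.
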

\begin{proof}Fix $\epsilon > 0$ and let $f=f_\epsilon\mbox{ for } \epsilon=0$. Let us recall that $\alpha(\epsilon)$ is uniquely implicitly defined by the equation $E((\alpha_\Ge I+K^*K)^{-1}K^*f_\Ge, f_{\epsilon})= \delta^2$ and by Lemma \ref{lemma-diff-alpha} it will be differentiable in some interval $(0,\epsilon_0)$ for all wavenumbers $k$.  Next consider
\begin{align*}
\alpha_\Ge \phi_{\alpha_\Ge} + K^{*}K\phi_{\alpha_\Ge} & = K^{*}f_{\epsilon},\\
\alpha_{0} \phi_{\alpha_{0}} + K^{*}K\phi_{\alpha_0} & = K^{*}f.
\end{align*}
Subtracting, we obtain
\begin{align}
 \alpha_{0}\phi_{\alpha_0}-\alpha_\Ge \phi_{\alpha_\Ge} + K^{*}K(\phi_{\alpha_0}-\phi_{\alpha_\Ge}) & = K^{*}(f - f_{\epsilon}),\notag\\
\alpha_0(\phi_{\alpha_0} - \phi_{\alpha_{\Ge}}) + (\alpha_0 - \alpha_\Ge)\phi_{\alpha_{\Ge}} + K^{*}K(\phi_{\alpha_0}-\phi_{\alpha_\Ge}) & = K^{*}(f - f_{\epsilon}).\label{3.1.1}
\end{align}
Integrating both sides of \eqref{3.1.1} against $\phi_{\alpha_0} - \phi_{\alpha_{\Ge}}$ yields
\begin{align}
& \displaystyle\alpha_0 \|\phi_{\alpha_0} - \phi_{\alpha_{\Ge}}\|_{L^2(\partial D_a)}^{2}  + (\alpha_0 - \alpha_{\Ge})(\phi_{\alpha_{\Ge}}, \, \phi_{\alpha_0} - \phi_{\alpha_{\Ge}})_{L^2(\partial D_a)} + \|K(\phi_{\alpha_0} - \phi_{\alpha_{\Ge}})\|_\Xi^{2}\notag\\
& \quad\quad= \displaystyle(K(\phi_{\alpha_0} - \phi_{\alpha_{\Ge}}), \, f - f_{\epsilon})_{\Xi}= \displaystyle(K_{1}(\phi_{\alpha_0} - \phi_{\alpha_{\Ge}}), \, f_{1} - f_{\epsilon, 1})_{L^2(\partial D_c)}.\label{3.1.2}
\end{align}
where, we have used \eqref{8} and the fact that $f_2=f_{\epsilon,2}=0$ in the last equality above. Thus,
\begin{eqnarray}
\label{ecuatie-1}
\!\!\!\!\!\!\!\!\!\alpha_0 \|\phi_{\alpha_0} - \phi_{\alpha_{\Ge}}\|_{L^2(\partial D_a)}^{2}  \leq & |\alpha_0 - \alpha_{\Ge}|\|\phi_{\alpha_{\Ge}}\|_{L^2(\partial D_a)}\|\phi_{\alpha_0} - \phi_{\alpha_{\Ge}}\|_{L^2(\partial D_a)} \nonumber\\
+ & \|f_{1} - f_{\epsilon, 1}\|_{L^2(\partial D_c)}\|K_{1}(\phi_{\alpha_0} - \phi_{\alpha_{\Ge}})\|_{L^2(\partial D_c)}
\end{eqnarray}
Observe that
\begin{align}
\|K_{1}(\phi_{\alpha_0} - \phi_{\alpha_{\Ge}})\|_{L^2(\partial D_c)} & \leq \|K_{1} \phi_{\alpha_0} - f_{ 1}\|_{L^2(\partial D_c)} + \|K_{1} \phi_{\alpha_{\Ge}} - f_{\Ge,1}\|_{L^2(\partial D_c)} + \|f_{1} - f_{\epsilon, 1}\|_{L^2(\partial D_c)}\nonumber\\
&\leq \Gd \|f_{1}\|_{L^2(\partial D_c)}+ \Gd \|f_{\Ge,1}\|_{L^2(\partial D_c)} + C\epsilon \|f_{1}\|_{L^2(\partial D_c)}\nonumber\\
& = \paren{ 2\delta + C\delta\epsilon +C\epsilon}\|f_{1}\|_{L^2(\partial D_c)}. \label{ecuatie-2}
\end{align}
where $f_{\epsilon,1}=f_{1} + \epsilon s$ with $\|s\|_{L^{2}(\partial D_{c})} \leq C \|f_{1}\|_{L^{2}(\partial D_{c})}$, and we have used the definition of $\phi_{\alpha_{\epsilon}}$ and \eqref{disc-fun} in the inequalities above. Using \eqref{ecuatie-2} in \eqref{ecuatie-1} we obtain
\begin{align}
\alpha_0 \|\phi_{\alpha_0} - \phi_{\alpha_{\Ge}}\|_{L^2(\partial D_a)}^{2} & \leq |\alpha_0 - \alpha_{\Ge}| \|\phi_{\alpha_{\Ge}}\|_{L^2(\partial D_a)}\|\phi_{\alpha_0} - \phi_{\alpha_{\Ge}}\|_{L^2(\partial D_a)} \notag\\
& + \epsilon\paren{ 2\delta + C\delta\epsilon +C\epsilon}\|f_{1}\|^2_{L^2(\partial D_c)}. \label{ecuatie-3}
\end{align}
If we define $A := \frac{\|\phi_{\alpha_\Ge} - \phi_{\alpha_{0}} \|_{L^{2}(\partial D_{a})}}{\|\phi_{\alpha_{\Ge}}\|_{L^{2}(\partial D_{a})}}$, inequality \eqref{ecuatie-3} implies that
\begin{align}
\alpha_0 A^{2} & \leq |\alpha_0 - \alpha_{\Ge}| A + \frac{\epsilon \paren{ 2\delta + C\delta\epsilon +C\epsilon} \|f_{1}\|_{L^{2}(\partial D_{c})}^{2}}{\|\phi_{\alpha_{\Ge}}\|_{L^{2}(\partial D_{a})}^{2}} \notag\\
& \leq |\alpha_0 - \alpha_{\Ge}| A + 16\epsilon \paren{ 2\delta + C\delta\epsilon +C\epsilon}\|K^*_1\|^2_{\mathcal O}, \label{quadratic}
\end{align}
where we have used \eqref{lb-phi} of Lemma \ref{lemma:phif1constant} in the last inequality above. Next, consider $$h(A) := A^{2} - \left|\displaystyle\frac{\alpha_{\Ge}}{\alpha_0} - 1 \right| A - 16\|K^*_1\|^2_{\mathcal O}\displaystyle\frac{\epsilon \paren{ 2\delta + C\delta\epsilon +C\epsilon}}{\alpha_0}.$$ Then, from \eqref{quadratic} we have $h(A) \leq 0$ and this implies that
\begin{equation*}
A \leq \displaystyle\frac{ \left|\displaystyle\frac{\alpha_{\Ge}}{\alpha_0} - 1 \right| + \sqrt{\left|\displaystyle\frac{\alpha_{\Ge}}{\alpha_0} - 1 \right|^{2} + 64\|K^*_1\|^2_{\mathcal O}\displaystyle\frac{\epsilon \paren{ 2\delta + C\delta\epsilon +C\epsilon}}{\alpha_0}}}{2}
\end{equation*}
which completes the proof.\end{proof}

Regarding the monotonic character of $g$,  we note that, as suggested by the numerics, $g$ is not in general \textit{globally} monotonic with respect to $\alpha$ as can be seen in Figure \ref{fig:nonincreasing_F}, which considers an antenna of radius $a = 0.01$, region of control characterized in polar coordinates by $r_{1} = 0.011$, $r_{2} = 0.015$, $\theta \in [3\pi/4, 5\pi/4]$, wave number $k=10$, $f_\epsilon$ given by \eqref{random-f} with $f_1=\frac{1}{4}H^{(1)}_0(k|\Bx-\Bx_0|)$ with $\mathbf{x}_{0} = [10000,0]^{T}$, and noise level $\epsilon=0.005$. But on the other hand, for the same geometry and functional settings as in Figure \ref{fig:nonincreasing_F}, we observe in Figure \ref{fig:plot_g_dg_alpha} that for each $\epsilon<0.015$, $g(\alpha, \epsilon) = E(\phi_{\alpha_{\epsilon}}, \, f_{\epsilon})$ is strictly increasing with respect to $\alpha$ in the interval $(10^{-4}, 1)$. Moreover, for every $\epsilon<0.015$ the Morozov solution $\alpha(\epsilon)$ is the unique solution of \eqref{Morozov-g} in $(10^{-4}, 1)$.

\begin{figure}
\includegraphics[trim = 20 20 20 20, clip = true, width=\textwidth]{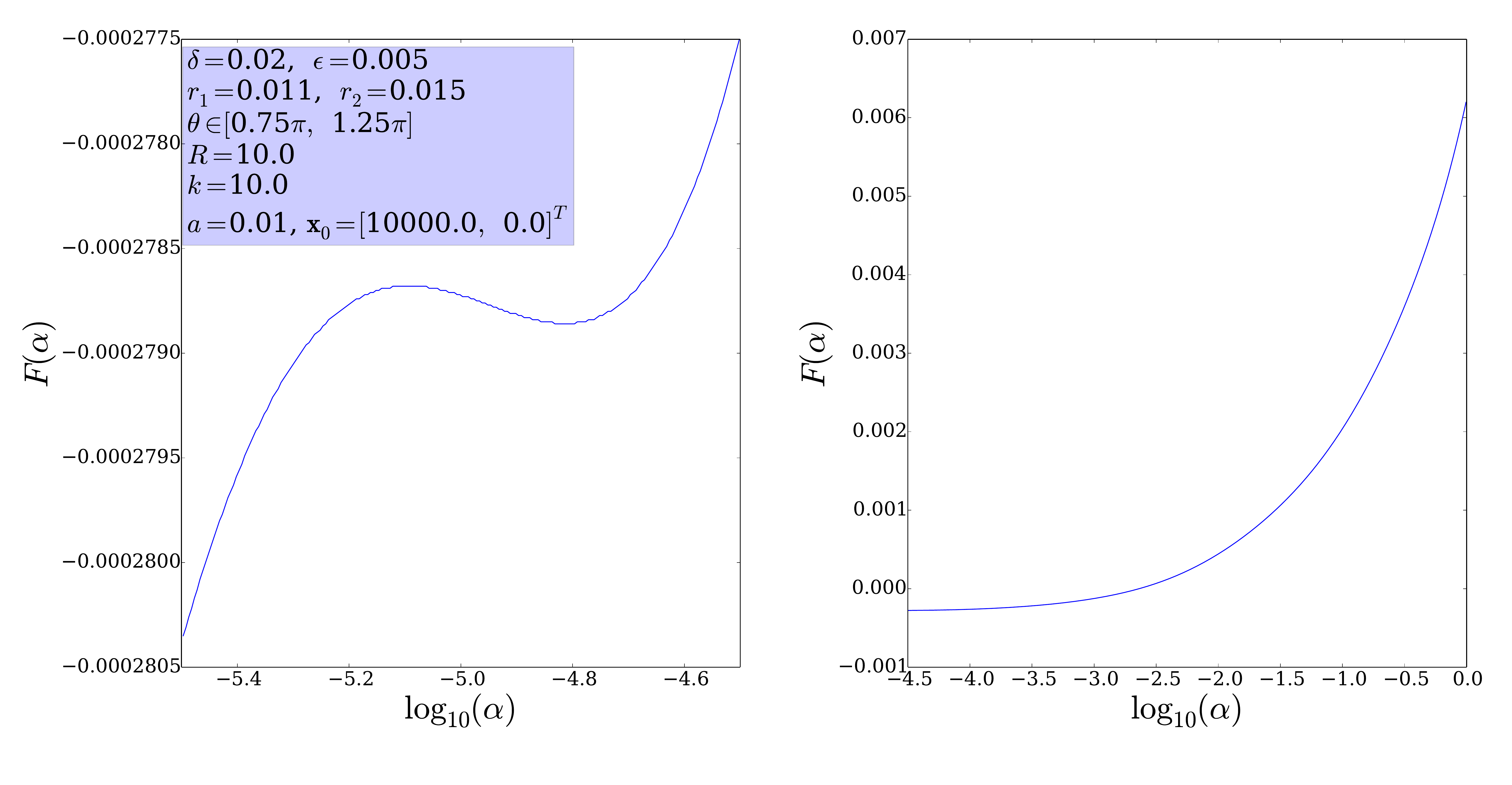}
\caption{Plot of $F(\alpha) = E(\phi_\alpha, f_\epsilon) - \delta^{2}$ for $\epsilon=0.005$ and $\alpha$ in a range where one can see its non-monotonic behavior below a particular threshold.}\label{fig:nonincreasing_F}
\end{figure}

In fact, for the same geometry and functional settings as in Figure \ref{fig:plot_g_dg_alpha} and for $k \in [1,100]$ and $\epsilon = 0.005$, Table \ref{tab:pktable} summarizes the values of $p_k>0$ for which $g(\alpha, \epsilon)$ is locally strictly monotonic with respect to $\alpha$ in an interval $(10^{-p_k}, 1)$, as well as the value of the Morozov solution for each $k$ (also see Figure \ref{fig:pk_plot}). Together with Figure \ref{fig:plot_g_dg_alpha} where $k=10$ and $\epsilon$ is varied in the interval $[0,0.015]$, this suggests that the Morozov solution $\alpha(\epsilon)$ of \eqref{Morozov-g} satisfies $\alpha(\epsilon)\in(10^{-p_k}, 1)$ at least for $\epsilon < 0.015$. This in turn together with the strict monotonicity $g$ implies the existence of a unique solution $\alpha(\epsilon)$ for \eqref{Morozov-g}. Then, uniqueness together with the fact that $\frac{\partial}{\partial \alpha}g(\alpha, \epsilon)\neq 0$ in $ (10^{-p_k}, 1)\times (0,0.015)$ (for $k=10$ shown in  Figure \ref{fig:plot_g_dg_alpha}) implies the differentiability of the Morozov solution $\alpha(\epsilon)$ by using the implicit function theorem.

\begin{figure}
\includegraphics[trim = 20 20 20 20, clip = true, width=\textwidth]{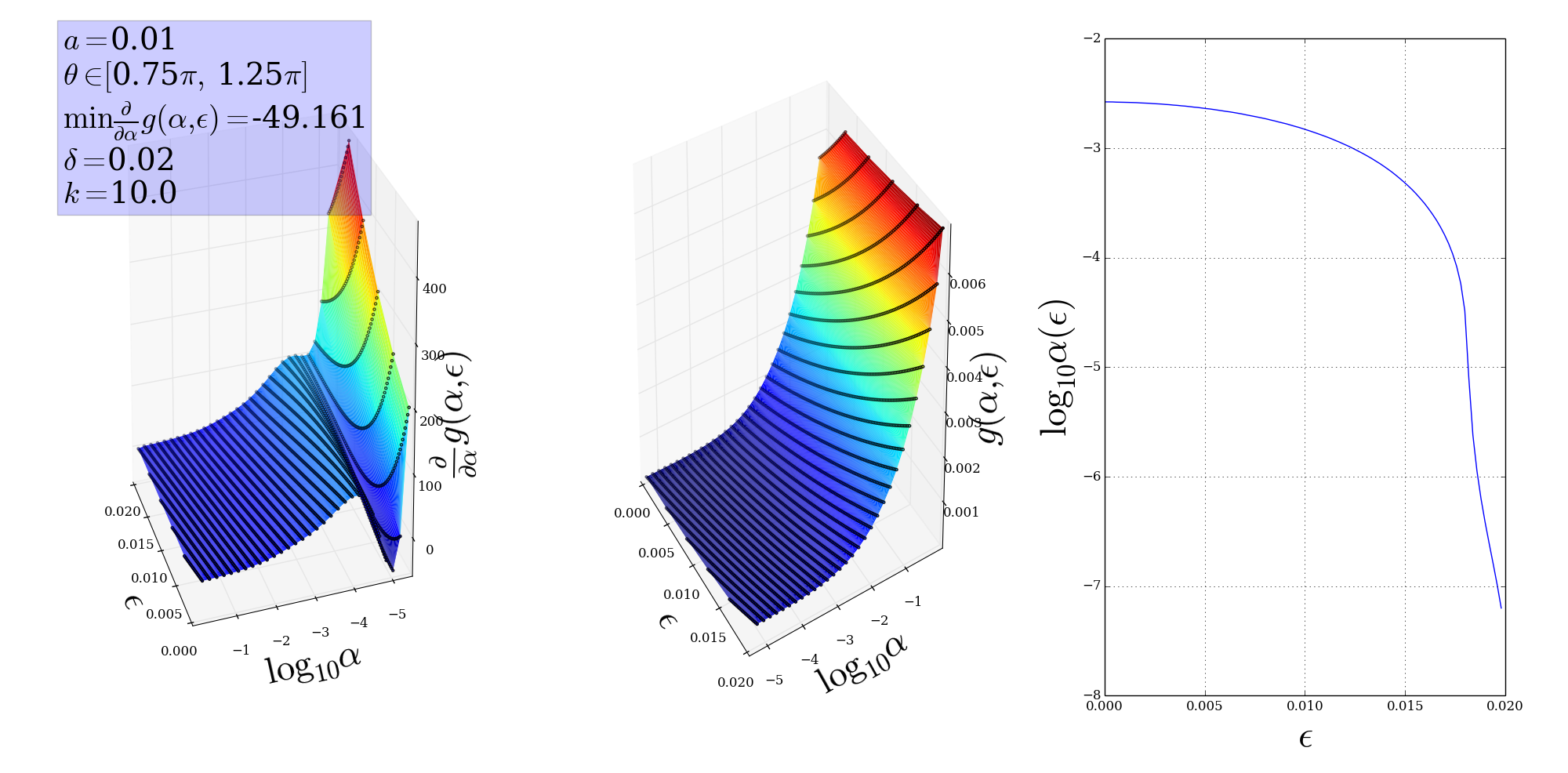}
\caption{Plot of $g(\alpha, \epsilon) = E(\phi_{\alpha_\epsilon}, f_\epsilon)$ and $\frac{\partial}{\partial \alpha}g(\alpha, \epsilon)$ with respect to $\alpha, \epsilon$ together with the unique largest value $\alpha(\epsilon)$ such that $E(\phi_{\alpha_\epsilon}, f_\epsilon) = \delta^{2}$, where $\delta = 0.02$ is fixed.}\label{fig:plot_g_dg_alpha}
\end{figure}

\begin{table}
\begin{center} 
\begin{tabular}{||l l l| l l l||} 
\hline $k$ & $-p_{k}$ & Morozov $\alpha$ & $k$ & $-p_{k}$ & Morozov $\alpha$ \\ [0.5ex] 
\hline \hline 
1.0 & -5.74057337341 & 0.0021397 & 51.0 & -5.63387601498 & 0.023987\\ 
\hline 
6.0 & -5.15371857022 & 0.0022445 & 56.0 & -5.5538513243 & 0.028226\\ 
\hline 
11.0 & -4.61487654411 & 0.0027985 & 61.0 & -5.58052339557 & 0.032734\\ 
 \hline 
16.0 & -4.43348001737 & 0.0037643 & 66.0 & -7.93866063316 & 0.038053\\ 
 \hline 
21.0 & -7.22374205154 & 0.0052228 & 71.0 & -7.78393971408 & 0.043891\\ 
 \hline 
26.0 & -6.73292218326 & 0.0072707 & 76.0 & -7.60786606025 & 0.048884\\ 
 \hline 
31.0 & -6.41280555768 & 0.0098052 & 81.0 & -7.41580451387 & 0.05425\\ 
 \hline 
36.0 & -6.18339417827 & 0.012607 & 86.0 & -7.2077344143 & 0.060871\\ 
 \hline 
41.0 & -5.97530913764 & 0.015782 & 91.0 & -7.00500135956 & 0.066982\\ 
 \hline 
46.0 & -5.78858575492 & 0.01971 & 96.0 & -6.90364624001 & 0.07278\\ 
 \hline 
\end{tabular} 
\end{center}
\caption{Table of values $-p_{k}$ such that $g(\alpha, \epsilon)$ is increasing with respect to $\alpha$ for $\alpha \geq 10^{-p_{k}}$. $\epsilon$ is fixed at $0.005$ in this case.}\label{tab:pktable}
\end{table}

\begin{figure}
\vspace{-3cm}
\includegraphics[width=\textwidth]{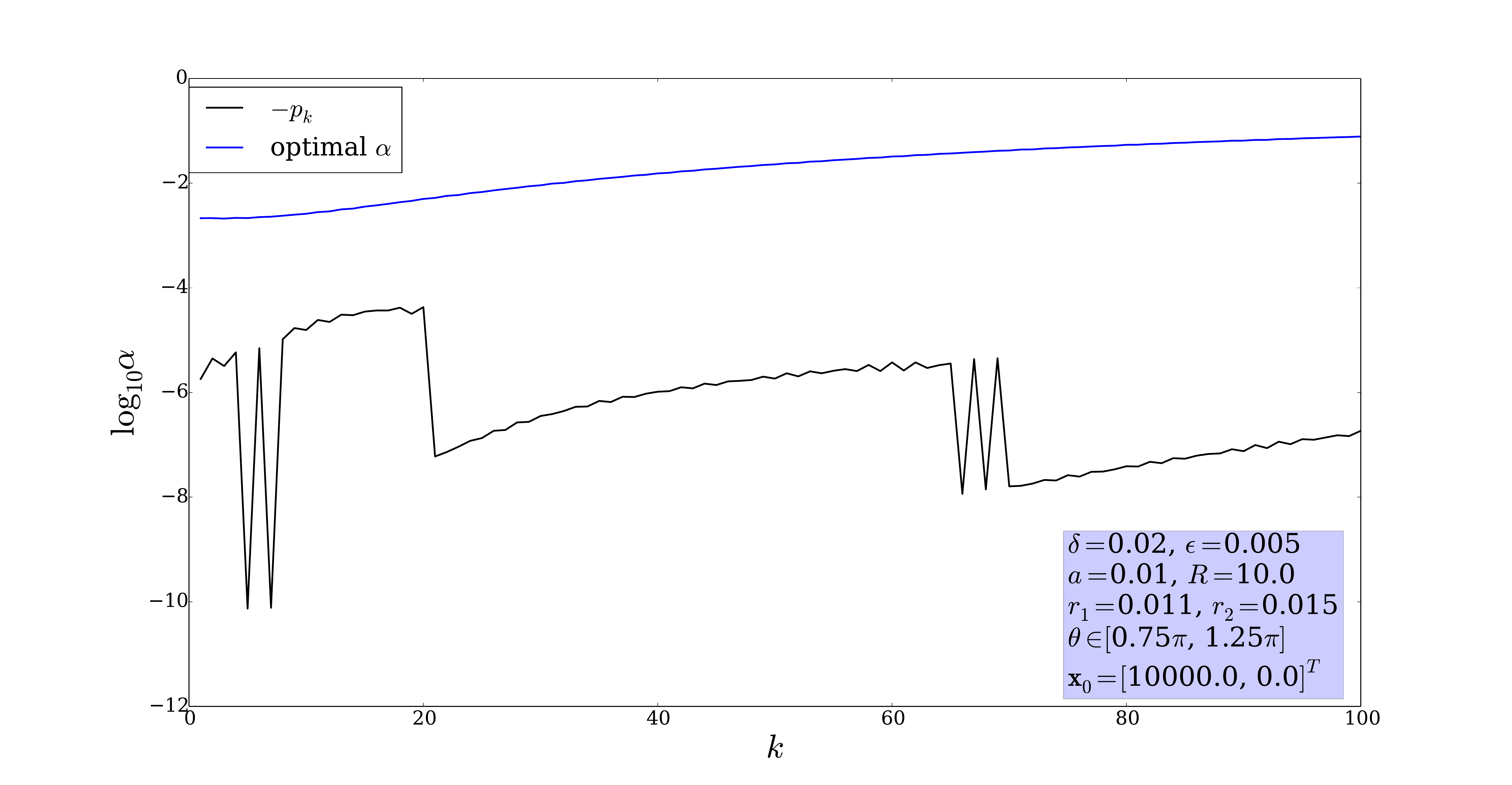}
\caption{Threshold value $p_{k}$ for which $F(\alpha)$ is increasing when $\alpha > 10^{-p_{k}}$. Also shows the value of $\alpha$ for which $F(\alpha) = \delta^{2}$ with the same setting as in Figure \ref{fig:plot_g_dg_alpha}, where $\delta = 0.02$ and the noise level $\epsilon = 0.005$.}\label{fig:pk_plot}
\end{figure}

For simplicity of notation, in what follows we will write sometimes $\alpha$ instead of $\alpha_\epsilon=\alpha(\epsilon)$ and we will use $\alpha'$ and $f_{\epsilon,i}'$ to denote $\frac{d\alpha}{d\epsilon}$ and $\frac{df_{\epsilon,i}}{d\epsilon}$ respectively. Motivated by the above numerics, we formulate the following more general hypothesis:

\begin{hypothesis}
\label{hypothesis-I}
Assume the same geometrical setup as in Section \ref{sec:background} and let $f_\epsilon, f_1$ be as in \eqref{random-f}. Then there exists $p_0>0$ and $\epsilon_0>0$ such that $\frac{\partial}{\partial \alpha}g(\alpha, \epsilon)\neq 0$ in $ (10^{-p_0}, 1)\times (0,\epsilon_0)$ for all wave numbers $k$, and the Morozov solution $\alpha(\epsilon)$ is the unique solution of \eqref{Morozov-g} in $(10^{-p_0}, 1)$.
\end{hypothesis}
For example, as shown in Table \ref{tab:pktable} and Figure \ref{fig:pk_plot}, for $f_\epsilon, f_1$ as in \eqref{random-f} with $s=\widehat{\nu}\Vert f_1\Vert_{ L^2(\partial D_c)}$ where $\widehat{\nu}\in L^2(\partial D_c)$ is a random perturbation with $\Vert \widehat{\nu}\Vert_{ L^2(\partial D_c)}=1$, and for the same geometry and data as in Figure  \ref{fig:plot_g_dg_alpha}, we have that Hypothesis \ref{hypothesis-I} is satisfied for $p_0=10^{-4}$ and $\epsilon_0=0.015$ for all $k=\overline{1,100}$. Thus, whenever Hypothesis \ref{hypothesis-I} is satisfied, the definition of $\alpha(\epsilon)$ and the implicit function theorem imply:

\begin{lemma}
\label{lemma-diff-alpha}
There exists $\epsilon_0>0$ such that for every $\epsilon\in(0,\epsilon_0)$, the function $\alpha:(0,\epsilon_0)\rightarrow(0,\infty)$, where for each $\epsilon\in(0,\epsilon_0)$, $\alpha(\epsilon)$ represents the Morozov solution of \eqref{Morozov-g},  will be differentiable for all wave numbers $k$.
\end{lemma}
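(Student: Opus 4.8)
The plan is to obtain the claim as a direct application of the implicit function theorem to equation \eqref{Morozov-g}, using Hypothesis \ref{hypothesis-I} to supply both the nonvanishing of $\partial_\alpha g$ and the global uniqueness that singles out the Morozov branch. Concretely, set $G(\alpha,\epsilon):=g(\alpha,\epsilon)-\delta^2$ on the rectangle $\mathcal{R}:=(10^{-p_0},1)\times(0,\epsilon_0)$, where $p_0,\epsilon_0$ are as in Hypothesis \ref{hypothesis-I}. By definition the Morozov solution satisfies $G(\alpha(\epsilon),\epsilon)=0$ with $\alpha(\epsilon)\in(10^{-p_0},1)$, and Hypothesis \ref{hypothesis-I} guarantees it is the only zero of $G(\cdot,\epsilon)$ in that interval.

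First I would verify that $g$, and hence $G$, is of class $C^1$ on $\mathcal{R}$, strengthening the separate partial differentiability already noted after \eqref{eq:galphaeps}. The essential observation is that the Tikhonov density $\phi_\alpha=(\alpha I+K^*K)^{-1}K^*f_\epsilon$ depends smoothly on $(\alpha,\epsilon)$: since $K^*K$ is self-adjoint and nonnegative, $\alpha I+K^*K$ is boundedly invertible for every $\alpha>0$, the resolvent $\alpha\mapsto(\alpha I+K^*K)^{-1}$ is differentiable in operator norm with derivative $-(\alpha I+K^*K)^{-2}$, and by \eqref{random-f} the data $f_\epsilon=(f_1+\epsilon s,0)$ is affine in $\epsilon$. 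Thus $(\alpha,\epsilon)\mapsto\phi_\alpha$ is jointly $C^1$ as a map into $L^2(\partial D_a)$. Feeding this into \eqref{disc-fun} and \eqref{eq:galphaeps}, and noting that $\|f_{\epsilon,1}\|_{L^2(\partial D_c)}^2=\|f_1+\epsilon s\|_{L^2(\partial D_c)}^2$ is a polynomial in $\epsilon$ which stays strictly positive on $(0,\epsilon_0)$ (using $\|s\|\leq 2\|f_1\|$ and shrinking $\epsilon_0$ below $1/2$ if necessary), exhibits $g$ as a composition of $C^1$ maps, so $g\in C^1(\mathcal{R})$ with continuous partials $\partial_\alpha g$ and $\partial_\epsilon g$.

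With regularity in hand, the implicit function theorem applies at every point of the Morozov graph. Fix $\epsilon_*\in(0,\epsilon_0)$; by Hypothesis \ref{hypothesis-I} we have $\partial_\alpha g(\alpha(\epsilon_*),\epsilon_*)\neq 0$, so there is a neighborhood of $(\alpha(\epsilon_*),\epsilon_*)$ in which the zero set of $G$ is the graph of a unique $C^1$ function $\epsilon\mapsto\widetilde{\alpha}(\epsilon)$. The uniqueness clause of Hypothesis \ref{hypothesis-I} forces $\widetilde{\alpha}(\epsilon)=\alpha(\epsilon)$ for $\epsilon$ near $\epsilon_*$, so $\alpha(\cdot)$ inherits differentiability at $\epsilon_*$. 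Since $\epsilon_*\in(0,\epsilon_0)$ was arbitrary and Hypothesis \ref{hypothesis-I} is posited for all wave numbers $k$, this shows that $\epsilon\rightsquigarrow\alpha(\epsilon)$ is differentiable on $(0,\epsilon_0)$ for every $k$.

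As for difficulty, because Hypothesis \ref{hypothesis-I} may be assumed, the genuinely nontrivial analytic inputs — the nonvanishing of $\partial_\alpha g$ and the global uniqueness of the Morozov branch in $(10^{-p_0},1)$ — are granted outright, and the remaining work is the bookkeeping of the $C^1$ dependence of the resolvent and of $g$ on the two parameters. The one point I expect to require mild care is matching the locally defined implicit-function branch $\widetilde{\alpha}$ to the globally defined Morozov solution $\alpha(\cdot)$; this is precisely what the uniqueness part of Hypothesis \ref{hypothesis-I} is designed to guarantee, so no further argument beyond invoking it is needed.
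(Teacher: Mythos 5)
Your argument is correct and follows essentially the same route as the paper, which likewise obtains the lemma as a direct consequence of Hypothesis \ref{hypothesis-I} together with the implicit function theorem, with the differentiability of $g$ attributed to classical results from \cite{Kirsch-Book96,Kress-Book99}. Your explicit verification of joint $C^1$ dependence via the resolvent $(\alpha I+K^*K)^{-1}$ and the affine dependence of $f_\epsilon$ on $\epsilon$ simply fills in detail the paper leaves implicit.
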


The next Lemma is a technical result needed in the stability estimate obtained in Corollary \ref{corolar-1}.
\begin{lemma}
\label{teorema-int}
Let $f_{1}$ be a solution of the Helmholtz equation in a neighborhood of $D_c$ satisfying the following source type condition:
\begin{equation}
\label{source-cond}
\displaystyle\left\Vert K_1\psi_0-\frac{f_1}{\Vert f_1\Vert_{L^{2}(\partial D_{c})}}\right\Vert_{L^{2}(\partial D_{c})}\!\!\!\!\leq C\delta \mbox{ for some }\psi_0\in L^2(\partial D_a) \mbox{ with } \Vert\psi_0\Vert_{L^2(\partial D_a)}\leq C \delta.
\end{equation}
Assume $R$ (radius of $B_R(\B0)$) is such that,
\begin{equation}
\label{R-bound}
\Vert f_1\Vert_{L^{2}(\partial D_{c})} \leq \sqrt{\pi R}.
\end{equation}
Consider $s_\epsilon=f_{1} + \frac{\epsilon}{2} \widehat{\nu}\|f_{1}\|_{L^{2}(\partial D_{c})}$ where $\widehat{\nu}\in L^2(\partial D_c)$ is a random perturbation with $\|\widehat{\nu}\| \leq 1$. Assume the same functional framework as in Proposition \ref{theorem:stabilityestimate} and that Hypothesis \ref{hypothesis-I}  holds true in the case when $f_\epsilon$ is given by
\begin{equation}
\label{hyp-1}
f_{\epsilon} = (f_{\epsilon,1}, f_{\epsilon,2})= (f_{1} + \epsilon s_\epsilon, \, 0).
\end{equation}
Then, there exists $\epsilon_0>0$ such that the Morozov solution of equation \eqref{Morozov-g} $\alpha=\alpha(\epsilon)$ satisfies
\begin{equation}
\alpha|\alpha'| \leq C\frac{\delta^2}{\sqrt{\alpha}}, \mbox{ for all } \epsilon<\epsilon_0. 
\end{equation}
\end{lemma}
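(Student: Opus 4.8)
The plan is to use that, by Lemma \ref{lemma-diff-alpha} and Hypothesis \ref{hypothesis-I}, the Morozov solution $\alpha=\alpha(\epsilon)$ is differentiable and is \emph{implicitly} defined by the identity $g(\alpha(\epsilon),\epsilon)=\delta^2$, with $g$ as in \eqref{eq:galphaeps}. Differentiating this identity in $\epsilon$ and applying the chain rule gives $\partial_\alpha g\cdot\alpha'+\partial_\epsilon g=0$, so that $\alpha'=-\partial_\epsilon g/\partial_\alpha g$ and hence
\[
\alpha|\alpha'| \;=\; \alpha\,\frac{|\partial_\epsilon g|}{\partial_\alpha g}.
\]
The whole estimate then reduces to a quantitative lower bound on $\partial_\alpha g$ and an upper bound on $|\partial_\epsilon g|$, after which I combine them with the bounds of Lemma \ref{lemma:phif1constant}.

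For the denominator I would quantify the strict monotonicity guaranteed by Hypothesis \ref{hypothesis-I}. Writing $\phi_\alpha=(\alpha I+K^*K)^{-1}K^*f_\epsilon$, so that $\partial_\alpha\phi_\alpha=-(\alpha I+K^*K)^{-1}\phi_\alpha$, and using the normal equation in the form $K^*(K\phi_\alpha-f_\epsilon)=-\alpha\phi_\alpha$, a computation patterned on the unweighted identity $\partial_\alpha\|K\phi_\alpha-f_\epsilon\|_\Xi^2=2\alpha\,\langle\phi_\alpha,(\alpha I+K^*K)^{-1}\phi_\alpha\rangle_{L^2(\partial D_a)}$ leads to a bound of the type
\[
\partial_\alpha g \;\geq\; \frac{c\,\alpha\,\|\phi_\alpha\|_{L^2(\partial D_a)}^2}{\alpha+\|K\|_{\mathcal O}^2},
\]
the positivity coming from self-adjointness of $(\alpha I+K^*K)^{-1}$ and the nonvanishing of $\partial_\alpha g$ on $(10^{-p_0},1)$ supplied by Hypothesis \ref{hypothesis-I}. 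By \eqref{lb-phi} we also have $\|\phi_\alpha\|_{L^2(\partial D_a)}\geq\|f_1\|_{L^2(\partial D_c)}/(4\|K_1^*\|_{\mathcal O})$, so the denominator is bounded below by $c\,\alpha\,\|f_1\|_{L^2(\partial D_c)}^2/\|K_1^*\|_{\mathcal O}^2$.

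For the numerator I would differentiate $E(\phi_\alpha,f_\epsilon)$ from \eqref{disc-fun} in $\epsilon$ at fixed $\alpha$, retaining the term produced by the $\epsilon$-dependence of the weight $\|f_{\epsilon,1}\|^{-2}$; that term is harmless because $\|K_1\phi_\alpha-f_{\epsilon,1}\|^2/\|f_{\epsilon,1}\|^2\leq g=\delta^2$ and $\|f_{\epsilon,1}'\|\lesssim\|f_1\|_{L^2(\partial D_c)}$. The principal contribution is the weighted pairing $2\,\re\,\langle K\phi_\alpha-f_\epsilon,\;K\partial_\epsilon\phi_\alpha-\partial_\epsilon f_\epsilon\rangle$, to which I apply Cauchy--Schwarz in the weighted inner product: its first factor equals $\sqrt{g}=\delta$ by the Morozov condition \eqref{Morozov-g}, and its second factor is the weighted residual of the perturbation direction $\partial_\epsilon f_\epsilon=(f_{\epsilon,1}',0)$ with $f_{\epsilon,1}'=f_1+\epsilon\,\widehat\nu\,\|f_1\|_{L^2(\partial D_c)}$. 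Controlling this residual is where the source condition \eqref{source-cond} and the radius condition \eqref{R-bound} enter: decomposing $f_1=\|f_1\|_{L^2(\partial D_c)}K_1\psi_0+(\text{error of size }C\delta)$, the range part $K_1\psi_0$ produces an $O(\sqrt{\alpha}\,\|\psi_0\|_{L^2(\partial D_a)})=O(\sqrt{\alpha}\,\delta)$ residual via the smoothing estimate $\|(\alpha I+K^*K)^{-1}K^*\|_{\mathcal O}\leq(2\sqrt\alpha)^{-1}$; the spurious second component $K_2\psi_0$ (arising because the data is $(K_1\psi_0,0)$ rather than $K\psi_0$) is absorbed by the weight $(2\pi R)^{-1}$ together with $\|f_1\|_{L^2(\partial D_c)}\leq\sqrt{\pi R}$; and the error and $\widehat\nu$ terms are bounded in norm by $C(\delta+\epsilon)$. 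This yields $|\partial_\epsilon g|\leq C\delta(\delta+\epsilon)$.

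Combining the two bounds gives
\[
\alpha|\alpha'| \;\leq\; \frac{\alpha\,C\delta(\delta+\epsilon)}{c\,\alpha\,\|f_1\|_{L^2(\partial D_c)}^2/\|K_1^*\|_{\mathcal O}^2}
\;=\; \frac{C\,\|K_1^*\|_{\mathcal O}^2\,\delta(\delta+\epsilon)}{\|f_1\|_{L^2(\partial D_c)}^2},
\]
and then using $\alpha\leq 4\delta\|K_1^*\|_{\mathcal O}^2$ from \eqref{ub-alpha} (so that $\sqrt\alpha\lesssim\sqrt\delta$) and shrinking $\epsilon_0$ so that $\delta+\epsilon\leq C\sqrt{\delta}\,\|f_1\|_{L^2(\partial D_c)}^2$, the right-hand side is dominated by $C\delta^2/\sqrt\alpha$ for all $\epsilon<\epsilon_0$, which is the claim. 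The main obstacle I expect is the weighted-norm bookkeeping: since $\phi_\alpha$ is the Tikhonov solution for the \emph{unweighted} inner product on $\Xi$ while $E$ measures a \emph{weighted} residual, the clean self-adjoint identity for $\partial_\alpha\|K\phi_\alpha-f_\epsilon\|^2$ must be carefully adapted, and it is exactly the second-component mismatch $(K_1\psi_0,0)$ versus $K\psi_0$ in the source-condition estimate that forces the use of \eqref{R-bound}.
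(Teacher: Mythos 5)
Your proposal is correct and follows essentially the same route as the paper: the paper likewise differentiates the Morozov identity $g(\alpha(\epsilon),\epsilon)=\delta^2$ in $\epsilon$, isolates the $\alpha'$-terms and bounds their coefficient from below by $C\alpha\|v_\alpha\|^2_{L^2(\partial D_a)}$ using the positivity of the $T_\alpha$-pairing (with \eqref{R-bound} absorbing the weight mismatch exactly as you anticipate), and controls the remaining terms via Cauchy--Schwarz, the source condition \eqref{source-cond}, the estimates $\|KR_\alpha Kz-Kz\|_{\Xi}\leq C\sqrt{\alpha}\|z\|_{L^2(\partial D_a)}$ and $\|R_\alpha\|_{\mathcal O}\leq(2\sqrt{\alpha})^{-1}$, together with Lemma \ref{lemma:phif1constant}. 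The one discrepancy is your intermediate claim $|\partial_\epsilon g|\leq C\delta(\delta+\epsilon)$: using only $\|R_\alpha\|_{\mathcal O}\leq(2\sqrt{\alpha})^{-1}$, the terms $KR_\alpha(f-K\psi_0)$ and $KR_\alpha v$ cost an extra factor $1/\sqrt{\alpha}$, which is exactly why the paper's bound (and the stated conclusion) carries $C\delta^2/\sqrt{\alpha}$ rather than $C\delta^2$ --- this is harmless for the final estimate, but your sharper claim would need the uniform bound on $\|KR_\alpha-I\|_{\mathcal O}$ rather than the operator estimate you cite.
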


\begin{proof} Define the weights
\begin{align*}
w_{1} & := \frac{1}{\|f_{\epsilon, 1}\|_{L^{2}(\partial D_{c})}^{2}}\\
w_{2} & := \frac{1}{2 \pi R}.
\end{align*}
and denote $T_{\alpha} := (K^{*}K + \alpha I)^{-1}$, $R_{\alpha} := T_{\alpha}K^{*}$. Then using the Einstein summation convention, we may write
\begin{equation*}
E(\phi_{\alpha}, f_{\epsilon})= w_{i}\|K_{i}\phi_{\alpha} - f_{\epsilon,i} \|_{L^{2}(W_{i})}^{2} = w^{i} \paren{K_{i}\phi_{\alpha} - f_{\epsilon,i}, \, K_{i}\phi_{\alpha} - f_{\epsilon,i}}_{L^{2}(W_{i})},
\end{equation*}
where $W_{1} = \partial D_{c}$, $W_{2} = \partial B_{R}$. Next, as in Lemma \ref{lemma-diff-alpha}, we observe that Hypothesis \ref{hypothesis-I} together with the implicit function theorem imply the uniqueness and differentiability of $\alpha(\epsilon)$, on the interval $\epsilon \in (0, \epsilon_0)$ for some $\epsilon_0>0$, where $\alpha(\epsilon)$ is uniquely and implicitly defined by the equation $E((\alpha_\Ge I+K^*K)^{-1}K^*f_\Ge, \, f_{\epsilon})= \delta^2$. Differentiating the equation $E((\alpha_\Ge I+K^*K)^{-1}K^*f_\Ge, \, f_{\epsilon})= \delta^2$ with respect to $\epsilon$ and noting that $\delta$ is fixed, we obtain
\begin{align}
0  = \partial_{\epsilon}E(\phi_{\alpha}, f_{\epsilon})& = 2w_{i}\,\re{ \paren{K_{i}\phi_{\alpha}' - f_{\epsilon,i}', \, K_{i}\phi_{\alpha} - f_{\epsilon, i}}}_{L^{2}(W_{i})} \notag\\
&  \quad - 2(w_{1})^{2}\,\re{\paren{f_{\epsilon,1}', f_{\epsilon, 1}}}_{L^{2}(\partial D_{c})}\|K_{1}\phi_{\alpha} - f_{\epsilon,1}\|_{L^{2}(\partial D_{c})}^{2}\label{eq:diffnorm1}.
\end{align}

Next, from $(K^{*}K + \alpha I)\phi_{\alpha} = K^{*}f_{\epsilon}$  we observe that
\begin{equation}
\phi_{\alpha}' = R_{\alpha}f_{\epsilon}' - \alpha' T_{\alpha}\phi_{\alpha}. \label{eq:phialphadiff}
\end{equation}
Thus, we may write
\begin{equation}
K_{i}\phi_{\alpha}' - f_{\epsilon, i}' = -\alpha' K_{i} T_{\alpha}\phi_{\alpha} + K_{i}R_{\alpha}f_{\epsilon}' - f_{\epsilon, i}'.\label{eq:sub1}
\end{equation}
By using (\ref{eq:sub1}) and \eqref{eq:phialphadiff} we obtain that
\begin{align}
\label{ec-11}
\!\!\!\!\!\!\!\!\!
2\paren{K_{i}\phi_{\alpha}' - f_{\epsilon,i}', \, K_{i}\phi_{\alpha}}_{L^{2}(W_{i})} &= -2\alpha' \paren{T_{\alpha}\phi_{\alpha}, \, K_{i}^{*}K_{i}\phi_{\alpha}}_{L^{2}(\partial D_{a})}\notag\\
& + 2\paren{K_{i}R_{\alpha}f_{\epsilon}' - f_{\epsilon, i}', \, K_{i}\phi_{\alpha}}_{L^{2}(W_{i})},
\end{align}
and
\begin{align}
\label{ec-12}
\!\!\!\!\!\!\!\!\!-2\paren{ f_{\epsilon,i}, \, K_{i}\phi_{\alpha}' - f_{\epsilon, i}'}_{L^{2}(W_{i})} & = 2\alpha'\paren{ f_{\epsilon,i}, \, K_{i}T_{\alpha}\phi_{\alpha}}_{L^{2}(W_{i})} - 2\paren{f_{\epsilon,i}, \, K_{i}R_{\alpha}f_{\epsilon}' - f_{\epsilon, i}'}_{L^{2}(W_{i})}\notag\\
& = 2\alpha'\paren{ K_{i}^{*}f_{\epsilon,i}, \, T_{\alpha}\phi_{\alpha}}_{L^{2}(\partial D_{a})} - 2\paren{f_{\epsilon,i}, \, K_{i}R_{\alpha}f_{\epsilon}' - f_{\epsilon, i}'}_{L^{2}(W_{i})}.
\end{align}

Let $P,Q$ be defined by
\begin{align}
P & = 2w_{i}\left[ \paren{K_{i}\phi_{\alpha} - f_{\epsilon, i}, \, K_{i}R_{\alpha}f_{\epsilon}' - f_{\epsilon,i}'}_{L^{2}(W_{i})} + \alpha' \paren{K_{i}^{*}f_{\epsilon,i} - K_{i}^{*}K_{i}\phi_{\alpha}, \, T_{\alpha}\phi_{\alpha}}_{L^{2}(\partial D_{a})} \right]\label{P}\\
Q & = 2(w_{1})^{2}\,\paren{f_{\epsilon,1}', f_{\epsilon, 1}}_{L^{2}(\partial D_{c})}\|K_{1}\phi_{\alpha} - f_{\epsilon,1}\|^{2}.\label{Q}
\end{align}
Then from \eqref{ec-11}, \eqref{ec-12} used in \eqref{eq:diffnorm1} we obtain
\begin{equation}
\label{ec-15'}
0=\partial_{\epsilon}E(\phi_{\alpha}, f_{\epsilon})= \re(P) - \re(Q).
\end{equation}
We focus first on $P$ introduced in \eqref{P}. In this regard, let us define
\begin{equation}
\label{P1}
L_{i} := \paren{ K_{i}\phi_{\alpha} - f_{\epsilon, i}, \, K_{i}R_{\alpha}f_{\epsilon}' - f_{\epsilon, i}'}_{L^{2}(W_{i})}.
\end{equation}
Observe that \eqref{hyp-1} implies
\begin{equation}
f_\epsilon'=(f'_{\epsilon,1}, f'_{\epsilon,2})=(f_1+\epsilon\widehat{\nu}\Vert f_1\Vert_{L^2(\partial D_c)},0)\label{P2}.
\end{equation}
First note that by using classical arguments based on the singular value decomposition for $K:L^2(\partial D_a)\rightarrow\Xi$, one can adapt the results in \cite{Kirsch-Book96} (Theorem 2.7) and obtain,
\begin{equation}
\label{regl-1}
\|KR_{\alpha}Kz - Kz \|_{\Xi} \leq C\sqrt{\alpha} \Vert z\Vert_{L^2(\partial D_a)}, \mbox{ for every } z\in L^2(\partial D_a).
\end{equation}

Let $\displaystyle f=\left(\frac{f_1}{\Vert f_1\Vert_{L^2(\partial D_c)}},0\right)$ and $\displaystyle v=\frac{f_\epsilon'}{\Vert f_1\Vert_{L^2(\partial D_c)}}-f$. By using the definition of $E$ and $\Xi$, \eqref{disc-fun}, \eqref{source-cond}, \eqref{R-bound}, \eqref{P2}, \eqref{regl-1}, Cauchy's inequality and the triangle inequality in \eqref{P1}, we obtain 
\begin{eqnarray}
\label{P4}
2|w_iL_i|  &\leq& C\delta (\sqrt{w_1}\|K_1R_{\alpha}f_{\epsilon}' - f_{\epsilon,1}' \|_{L^2(\partial D_c)}+\sqrt{w_2}\|K_2R_{\alpha}f_{\epsilon}' - f_{\epsilon,2}' \|_{L^2(\partial B_R(\B0))})\nonumber\\
&\leq&C\delta (\|K_1R_{\alpha}(v+f) - (v_1+f_1) \|_{L^2(\partial D_c)}+\|K_2R_{\alpha}(v+f) - (v_2+f_2) \|_{L^2(\partial B_R(\B0))})\nonumber\\
&\leq& C\delta \|KR_{\alpha}(v+f) - (v+f) \|_{\Xi}\nonumber\\
&\leq& C\delta ( \|KR_{\alpha}f - f\|_{\Xi}\!+\!\|KR_{\alpha}v\! - \!v\|_{\Xi})\nonumber\\
&\leq& C\delta(\|KR_{\alpha}K\psi_0 - K\psi_0 \|_{\Xi}+\|KR_{\alpha}(K\psi_0 - f) \|_{\Xi}\!+\!\|K\psi_0 - f\|_{\Xi})+C\frac{\delta\epsilon}{\sqrt{\alpha}}\nonumber\\
&\leq& C\delta^2\sqrt{\alpha} +C\frac{\delta^2}{\sqrt{\alpha}}+C\delta^2+C\frac{\delta\epsilon}{\sqrt{\alpha}}\nonumber\\
&\leq& C\frac{\delta^2}{\sqrt{\alpha}},
\end{eqnarray}
where Einstein summation convention was used and where, in the second inequality above we make use of \eqref{R-bound} to obtain $\sqrt{\frac{w_2}{w_1}}\leq 1$ and $\sqrt{w_1}\Vert f_1\Vert_{L^2(\partial D_c)}<C$ for small enough $\epsilon$, and in the fourth and fifth inequalities above we have used that $||R_\alpha||_{{\cal O}}\leq\frac{1}{2\sqrt{\alpha}}$ (e.g. see \cite{Kirsch-Book96}), and respectively, that $\epsilon < \delta$ and $\psi_0$ satisfies the source condition \eqref{source-cond}. 

Expanding $P$ defined in \eqref{P} and using the fact that $f_{\epsilon,2} = 0$ and $\phi_{\alpha} = T_{\alpha}K^{*}f_{\epsilon} = T_{\alpha}K_{1}^{*}f_{\epsilon,1}$, we obtain
\begin{align}
\label{ec-13}
P & = \frac{2\alpha'}{\|f_{\epsilon,1}\|_{L^{2}(\partial D_{c})}^{2}} \paren{ K_{1}^{*}f_{\epsilon,1},\, T_{\alpha}\phi_{\alpha}}_{L^{2}(\partial D_{a})} - \frac{2\alpha'}{\|f_{\epsilon,1}\|_{L^{2}(\partial D_{c})}^{2}}\paren{K_{1}^{*}K_{1}\phi_{\alpha}, \, T_{\alpha}\phi_{\alpha}}_{L^{2}(\partial D_{a})}\notag\\
& \quad - \frac{2 \alpha'}{2\pi R} \paren{K_{2}^{*}K_{2}\phi_{\alpha},\, T_{\alpha}\phi_{\alpha}}_{L^{2}(\partial D_{a})} + 2w_{i}L_{i}\notag\\
& = \frac{2\alpha'}{\|f_{\epsilon,1}\|^{2}} \paren{T_{\alpha}^{-1}\phi_{\alpha},\,T_{\alpha}\phi_{\alpha}}_{L^{2}(\partial D_{a})} - \frac{2\alpha'}{\|f_{\epsilon,1}\|^{2}}\paren{K_{1}^{*}K_{1}\phi_{\alpha}, T_{\alpha}\phi_{\alpha}}_{L^{2}(\partial D_{a})}\notag\\
& \quad - \frac{2 \alpha'}{2\pi R} \paren{K_{2}^{*}K_{2}\phi_{\alpha},\, T_{\alpha}\phi_{\alpha}}_{L^{2}(\partial D_{a})} + 2w_{i}L_{i}\notag\\
& = \frac{2\alpha'}{\|f_{\epsilon,1}\|^{2}} \paren{(\alpha I + K_{2}^{*}K_{2})\phi_{\alpha},\,T_{\alpha}\phi_{\alpha}}_{L^{2}(\partial D_{a})} - \frac{2 \alpha'}{2\pi R} \paren{K_{2}^{*}K_{2}\phi_{\alpha},\, T_{\alpha}\phi_{\alpha}}_{L^{2}(\partial D_{a})} + 2w_{i}L_{i}\notag\\
& = \frac{2\alpha \alpha'}{\|f_{\epsilon,1}\|^{2}} \paren{\phi_{\alpha},\,T_{\alpha}\phi_{\alpha}}_{L^{2}(\partial D_{a})} + \frac{\alpha'}{\|f_{\epsilon,1}\|^{2}}B\paren{K_{2}^{*}K_{2}\phi_{\alpha}, \, T_{\alpha}\phi_{\alpha}}_{L^{2}(\partial D_{a})} + 2w_{i}L_{i},
\end{align}
where $B = 2 - \displaystyle\frac{\|f_{\epsilon,1}\|_{L^{2}(\partial D_{c})}^{2}}{\pi R}$ and we have used that $T_\alpha^{-1}=\alpha I+K_1^*K_1+K_2^*K_2$ in the third equality above. Observe that \eqref{R-bound} implies $B\geq 0$. Introduce the notation $\widetilde{K}_{2} := \sqrt{B}K_{2}$, and denote $\displaystyle v_{\alpha} := \frac{\phi_{\alpha}}{\|f_{\epsilon,1}\|_{L^{2}(\partial D_{c})}}$. Then \eqref{ec-13} becomes
\begin{align}
\label{ec-14}
P & = 2 \alpha \alpha' \paren{v_{\alpha}, \, T_{\alpha}v_{\alpha}}_{L^{2}(\partial D_{a})} + \alpha' \paren{ \widetilde{K}_{2}^{*}\widetilde{K}_{2}v_{\alpha}, \, T_{\alpha}v_{\alpha}}_{L^{2}(\partial D_{a})} + 2w_{i}L_{i}\notag\\
& = \alpha \alpha' \paren{v_{\alpha}, \, T_{\alpha}v_{\alpha}}_{L^{2}(\partial D_{a})} + \alpha' \paren{ (\alpha I + \widetilde{K}_{2}^{*}\widetilde{K}_{2})v_{\alpha}, \, T_{\alpha}v_{\alpha}}_{L^{2}(\partial D_{a})} + 2w_{i}L_{i}.
\end{align}
From \cite{kato} (see Section V.3.10), for any self-adjoint linear operator $A:H\rightarrow H$, where $H$ is a given Hilbert space (real or complex), we have that:
\begin{equation}
\label{Kato}
0<\gamma = \inf_{\lambda \in \mathrm{Sp}(A)}\lambda \Longrightarrow (Ax, x)_H \geq \gamma(x,x)_H,
\end{equation}
where $(\cdot,\cdot)$ in \eqref{Kato} denotes the usual Hilbert product and where $\mathrm{Sp}(A)$ denotes the real spectrum of the operator $A$. Then, by using \eqref{Kato} for the operator $T_\alpha$ we obtain
\begin{align}
\label{Kato-1}
\paren{ v_{\alpha}, \, T_{\alpha}v_{\alpha} }_{L^{2}(\partial D_{a})} &\geq \frac{1}{\alpha + \mu_{1}^{2}}\|v_{\alpha}\|_{L^{2}(\partial D_{a})}^{2}\geq \frac{1}{1 + \mu_{1}^{2}}\|v_{\alpha}\|_{L^{2}(\partial D_{a})}^{2}\notag\\
&\geq C \|v_{\alpha}\|_{L^{2}(\partial D_{a})}^{2},
\end{align}
where we have used that $\displaystyle \frac{1}{\alpha+\mu_1^2} = \inf_{\lambda \in \mathrm{Sp}(T_\alpha)}\lambda$ with $\mu_1$ denoting the largest singular value of $K$.

Next consider  $\displaystyle D_{\alpha} := \alpha I + \widetilde{K}_{2}^{*}\widetilde{K}_{2}$. Then, because $D_\alpha$ and $T_\alpha$ are linear, bounded, self-adjoint, invertible and positive definite operators, we have that $D_\alpha T_{\alpha}$ will also be linear, bounded, self-adjoint, invertible and have strictly positive eigenvalues. Indeed, for any eigenvalue-eigenvector pair $(x,\lambda)$ of $D_\alpha T\alpha$ we have
\begin{equation*}
D_\alpha T_\alpha x=\lambda x\Rightarrow T_\alpha x=\lambda D_\alpha^{-1}x\Rightarrow \lambda=\frac{( T_\alpha x,x)}{( D^{-1}_\alpha x,x)}\geq 0.
\end{equation*}
Observing that $0\notin \mathrm{Sp}(D_\alpha T_\alpha)$ we have the claim, and the positive definiteness of $D_\alpha T_\alpha$ follows. Thus we have
\begin{equation}
\label{ec-15}
\paren{ (\alpha I + \widetilde{K}_{2}^{*}\widetilde{K}_{2})v_{\alpha}, \, T_{\alpha}v_{\alpha}}_{L^{2}(\partial D_{a})} = \paren{ D_{\alpha}v_{\alpha},\, T_{\alpha}v_{\alpha}}_{L^{2}(\partial D_{a})}\geq 0
\end{equation}

From  \eqref{ec-14}, \eqref{Kato-1} and \eqref{ec-15} used in \eqref{ec-15'}, we obtain
\begin{align}
\label{ec-17}
|2w_{i}L_{i}| + |Q| & \geq |\alpha'| \left| \alpha \paren{v_{\alpha}, \, T_{\alpha} v_{\alpha}}_{L^{2}(\partial D_{a})} + \paren{ D_{\alpha}v_{\alpha}, \, T_{\alpha}v_{\alpha}}_{L^{2}(\partial D_{a})} \right|\notag\\
& \geq |\alpha'| C\alpha\|v_{\alpha}\|_{L^{2}(\partial D_{a})}^{2} \notag\\
& \geq C \alpha |\alpha'| \|v_{\alpha}\|_{L^{2}(\partial D_{a})}^{2}.
\end{align}
From \eqref{Q} and elementary algebraic manipulations we obtain,
\begin{equation}
\label{ec-18}
|Q| \leq \frac{2 \|f_{1}\|_{L^{2}(\partial D_{c})}}{\|f_{\epsilon, 1}\|_{L^{2}(\partial D_{c})}^{4}} \|f_{\epsilon, 1}\|_{L^{2}(\partial D_{c})}\cdot \delta^{2} \|f_{\epsilon,1}\|_{L^{2}(\partial D_{c})}^{2} = \frac{2 \|f_{1}\|_{L^{2}(\partial D_{c})} \delta^{2}}{\|f_{\epsilon, 1}\|_{L^{2}(\partial D_{c})}} \leq C\delta^{2}
\end{equation}
Recalling that $\displaystyle v_{\alpha} := \frac{\phi_{\alpha}}{\|f_{\epsilon,1}\|_{L^{2}(\partial D_{c})}}$, Lemma \ref{lemma:phif1constant} implies
\begin{equation}
\label{ec-19}
\|v_{\alpha}\|_{L^{2}(\partial D_{a})} \geq C.
\end{equation}
Then from \eqref{P4}, \eqref{ec-18}, and \eqref{ec-19} used in \eqref{ec-17} we finally obtain the statement of the Lemma:
\begin{equation}
\label{ecuatie-4}
\alpha |\alpha'|
 \leq C \delta^{2}+ C\frac{\delta^2}{\sqrt{\alpha}}\leq C\frac{\delta^2}{\sqrt{\alpha}}, \mbox{ for } \epsilon<\epsilon_0.
\end{equation}
\end{proof}

\begin{Arem}
Note that $\|s_\epsilon\|_{L^{2}(\partial D_{c})}\leq (1+\frac{\epsilon}{2})\|f_{1}\|_{L^{2}(\partial D_{c})}$ and thus $f_\epsilon$ as defined above satisfies \eqref{random-f}.
\end{Arem}
The next result is a simple consequence of Proposition \ref{theorem:stabilityestimate} and Lemma \ref{teorema-int}. We have,
\begin{corollary}
\label{corolar-1}
Assume the same notation and the same framework as in Proposition \ref{theorem:stabilityestimate}. Assume also that for $d_c=diam(D_c)$ small enough there exists $d=dist(\partial D_c,\partial D_a)$ small enough so that $\alpha(0)=\alpha_0\approx\delta$. Then we have
\begin{equation}
\displaystyle\frac{\| \phi_{\alpha_\epsilon} - \phi_{\alpha_0}\|_{L^{2}(\partial D_{a})}}{\|\phi_{\alpha_\Ge}\|_{L^{2}(\partial D_{a})}} \leq C\sqrt{\epsilon}.
\end{equation}
\end{corollary}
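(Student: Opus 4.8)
The plan is to combine the explicit bound from Proposition \ref{theorem:stabilityestimate} with the derivative estimate of Lemma \ref{teorema-int}, using the calibration $\alpha_0\approx\delta$ to control the two pieces of the right-hand side separately. Writing $A:=\frac{\|\phi_{\alpha_\epsilon}-\phi_{\alpha_0}\|_{L^2(\partial D_a)}}{\|\phi_{\alpha_\epsilon}\|_{L^2(\partial D_a)}}$, Proposition \ref{theorem:stabilityestimate} gives
\begin{equation*}
A\leq \frac{T_1+\sqrt{T_1^2+T_2}}{2},\qquad T_1:=\left|\frac{\alpha_\epsilon}{\alpha_0}-1\right|,\quad T_2:=16\,\frac{\epsilon\paren{2\delta+C\delta\epsilon+C\epsilon}}{\alpha_0}\|K_1^*\|_{\mathcal O}.
\end{equation*}
Since $\sqrt{T_1^2+T_2}\leq T_1+\sqrt{T_2}$, it suffices to show $T_1\leq C\sqrt\epsilon$ and $T_2\leq C\epsilon$; the conclusion $A\leq C\sqrt\epsilon$ then follows immediately. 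So the entire argument reduces to estimating $T_1$ and $T_2$ under the standing hypothesis $\alpha_0\approx\delta$.

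For $T_1$ I would first upgrade Lemma \ref{teorema-int} to a pointwise bound on the derivative: the inequality $\alpha|\alpha'|\leq C\delta^2/\sqrt\alpha$ rearranges to $|\alpha'|\leq C\delta^2\alpha^{-3/2}$. The key step is to show that $\alpha(\tau)$ stays comparable to $\delta$ on the whole interval $[0,\epsilon]$. Since $\alpha(0)=\alpha_0\approx\delta$ and $\alpha(\cdot)$ is differentiable on $(0,\epsilon_0)$ by Lemma \ref{lemma-diff-alpha}, this can be secured by a short continuity/bootstrap argument — equivalently, one integrates $\frac{d}{d\epsilon}\alpha^{5/2}=\tfrac{5}{2}\alpha^{3/2}\alpha'$, whose modulus is bounded by $C\delta^2$, to get $|\alpha_\epsilon^{5/2}-\alpha_0^{5/2}|\leq C\delta^2\epsilon$, forcing $\alpha_\epsilon\approx\delta$ for $\epsilon$ small. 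With $\alpha(\tau)\approx\delta$ in hand, $|\alpha'|\leq C\delta^2\delta^{-3/2}=C\sqrt\delta$, so the mean value theorem gives $|\alpha_\epsilon-\alpha_0|\leq C\sqrt\delta\,\epsilon$ and hence $T_1=\frac{|\alpha_\epsilon-\alpha_0|}{\alpha_0}\leq \frac{C\sqrt\delta\,\epsilon}{\delta}=\frac{C\epsilon}{\sqrt\delta}$. Choosing $\epsilon_0\lesssim\delta$ makes $\sqrt{\epsilon/\delta}\leq 1$, whence $T_1\leq C\sqrt\epsilon\cdot\sqrt{\epsilon/\delta}\leq C\sqrt\epsilon$.

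The estimate for $T_2$ is then routine: using $\alpha_0\approx\delta$ and that $\|K_1^*\|_{\mathcal O}$ is one of the admissible constants $C$, I would write $T_2=16\|K_1^*\|_{\mathcal O}\,\epsilon\bigl(2+C\epsilon+C\epsilon/\delta\bigr)$, and since $\epsilon/\delta\leq 1$ for $\epsilon<\epsilon_0\lesssim\delta$ the bracket is bounded by an absolute constant, giving $T_2\leq C\epsilon$. Substituting both bounds yields
\begin{equation*}
A\leq \frac{C\sqrt\epsilon+\sqrt{C\epsilon+C\epsilon}}{2}\leq C\sqrt\epsilon,
\end{equation*}
which is the claim.

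I expect the main obstacle to be the second step: converting the differential inequality of Lemma \ref{teorema-int} into the clean bound $|\alpha_\epsilon-\alpha_0|\leq C\sqrt\delta\,\epsilon$. This genuinely requires that $\alpha(\tau)$ not drift toward $0$ along $[0,\epsilon]$ (otherwise the factor $\alpha^{-3/2}$ blows up), so the hypothesis $\alpha_0\approx\delta$ together with the differentiability from Lemma \ref{lemma-diff-alpha} must be leveraged carefully, and one must keep track of how small $\epsilon_0$ has to be (essentially $\epsilon_0\lesssim\delta$) for the lower-order $\epsilon/\delta$ contributions in both $T_1$ and $T_2$ to remain harmless. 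Everything else is elementary algebra on the quadratic bound of Proposition \ref{theorem:stabilityestimate}.
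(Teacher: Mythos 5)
Your proposal is correct and follows essentially the same route as the paper: the quadratic bound of Proposition \ref{theorem:stabilityestimate} is reduced to estimating $T_1$ and $T_2$, and $T_1$ is controlled by applying the mean value theorem to $\alpha^{5/2}$, whose derivative is bounded by $C\delta^2$ via Lemma \ref{teorema-int}, together with $\alpha_0\approx\delta$ and $\epsilon<\delta$. The only cosmetic difference is that your detour through establishing $\alpha(\tau)\approx\delta$ along $[0,\epsilon]$ is not needed, since the lemma bounds $\alpha^{3/2}|\alpha'|$ pointwise regardless of where $\alpha(\tau)$ sits, which is exactly how the paper argues.
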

\begin{proof}
 From Lemma \ref{teorema-int} we obtain that
\begin{equation}
\label{estimare}
|\alpha_\epsilon'| \alpha_\epsilon^{\frac{3}{2}}\leq C\delta^2.
\end{equation}
\noindent Estimate \eqref{estimare} and Cauchy's theorem implies that
\begin{equation}
\label{ecuatie-5}
\left|\frac{\alpha_\epsilon^{\frac{5}{2}}}{\alpha_0^{\frac{5}{2}}}-1\right|= \frac{5}{2}\epsilon\left|\alpha'_{\epsilon_*}\alpha_{\epsilon_*}^{\frac{3}{2}}\alpha_0^{-\frac{5}{2}}\right|\leq C\epsilon\delta^2\alpha_0^{-\frac{5}{2}}\leq   C\sqrt{\epsilon},
\end{equation}
where $\epsilon_*\in(0,\epsilon)$ and we used that $\epsilon<\delta$. Next, simple algebraic manipulation and \eqref{ecuatie-5} imply
\begin{equation}
\left|\frac{\alpha_\epsilon}{\alpha_0}-1\right|\leq \left|\frac{\alpha_\epsilon^5}{\alpha_0^5}-1\right|\leq C\sqrt{\epsilon}\left(\frac{\alpha_\epsilon^{\frac{5}{2}}}{\alpha_0^{\frac{5}{2}}}+1\right)\leq C\sqrt{\epsilon}(2+ C\sqrt{\epsilon})\leq C\sqrt{\epsilon}.
\end{equation}

This together with Proposition \ref{theorem:stabilityestimate} imply the statement of the Corollary.
\end{proof}

\begin{Arem}
We note that all the above results can be adapted to three dimensions. The proof follows exactly the same steps by considering the natural extension of the definition of the discrepancy function $E$ in three dimensions.
\end{Arem}

\begin{Arem}
The assumption made in Corollary \ref{corolar-1} that $\alpha_0\approx\delta$ for $d_c=diam(D_c)$ and $dist(\partial D_c,\partial D_a)=d$ small enough is based on \eqref{ub-alpha} of Lemma \ref{lemma:phif1constant} and on the numerical results presented in Section \ref{sec:numerics}. In particular, for the same settings as in Figure \ref{fig:nonincreasing_F}, Figure \ref{fig:besselFS_alpha0_vary_k_mu} shows that given small $d_c$ for small enough $d$, we have roughly that $10^{-2.5} \leq \alpha_{0} \leq 10^{-1}$. Since $\delta = 2\cdot10^{-2}$ we have in this case that $\frac{1}{2\sqrt{5}} \delta \lesssim \alpha_{0} \lesssim 5 \delta$.
\end{Arem}

\begin{Arem}
\label{constant}
As suggested by our numerical results in Section \ref{sec:numerics} we beleive that the constants (denoted by $C$) in Proposition \ref{theorem:stabilityestimate}, Lemma \ref{teorema-int} and Corollary \ref{corolar-1} will only have small values for $d_c=diam(D_c)$ and $dist(\partial D_c,\partial D_a)=d$ small enough.
\end{Arem}

\begin{figure}
\includegraphics[trim = 20 20 20 20, clip = true, width=\textwidth]{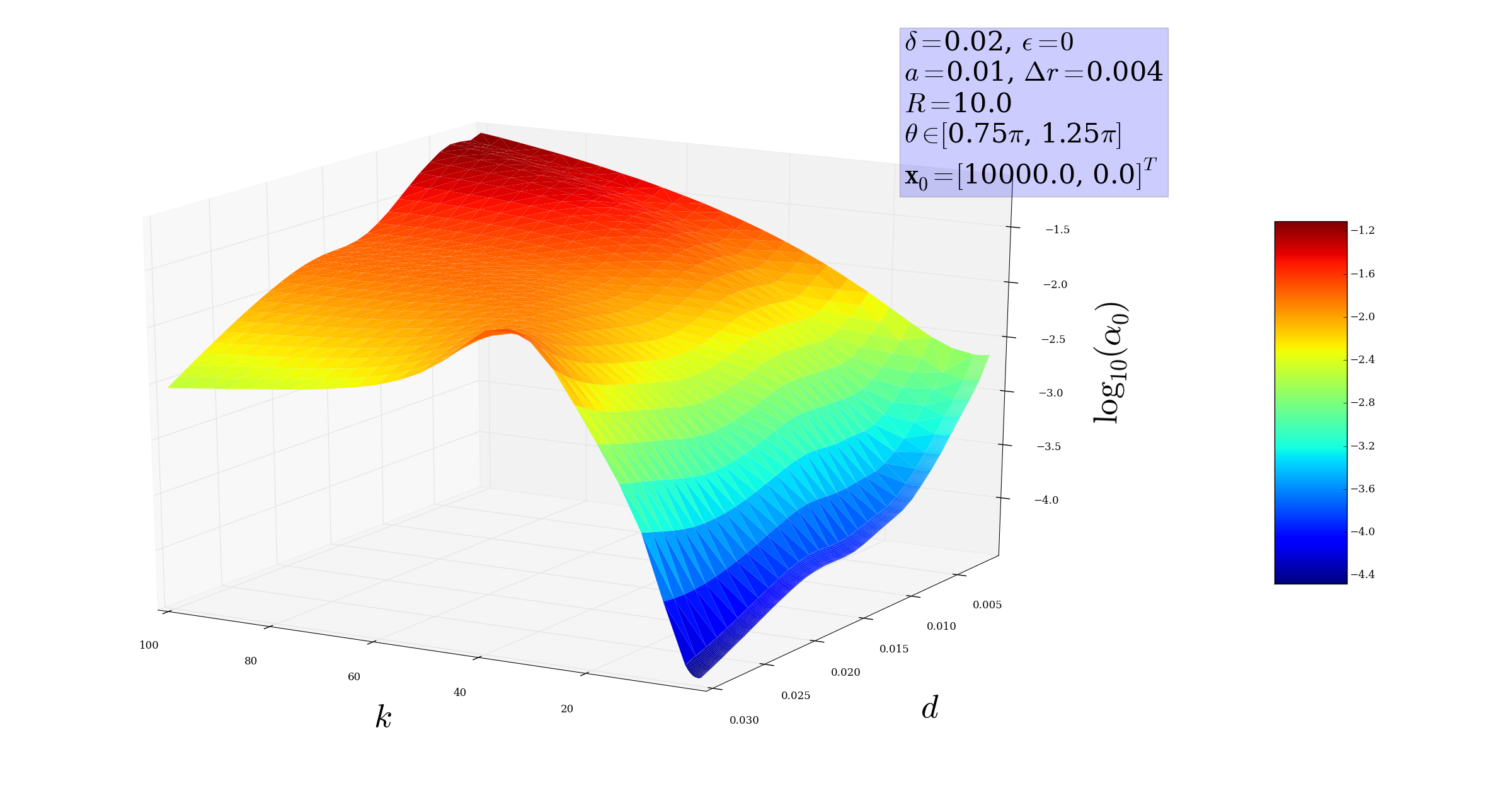}
\caption{Plot of $\alpha_{0}$ with respect to $k$ and $\mu$ with $\delta = 0.02$.}\label{fig:besselFS_alpha0_vary_k_mu}
\end{figure}

\section{Numerics}
\label{sec:numerics}
In this section we proceed with the numerical study of the minimal norm solution for \eqref{Morozov0} obtained through Tikhonov regularization with the Morozov discrepancy principle for the choice of the regularization parameter in two dimensions. First we focus on the general setup of our numerical approach, and then in Section \ref{subsec:parametersetup} we discuss more specifically the parameters used in our numerical examples. In Sections \ref{subsec:nearfieldstability} and \ref{subsec:sphericalsource} we present numerical data which demonstrates how stably $\phi$ depends on $f$ and various control statistics for a spherical point source. All figures generally display their respective parameters in an offset legend.

For all of the numerical computations done in this section, we discretize the integral operator $K$ via the method of moment collocation. We refer to (\cite{Kress-Book99}, \S 17.4) for more details on the method. First we choose an approximate basis of functions for $L^{2}(\partial D_{a})$. To do this we suppose the domain $D_{a}$ can be parametrized in polar coordinates by points
\begin{equation*}
(s(\tau)\cos{\tau}, \, s(\tau)\sin{\tau})), \quad \tau \in [0,2\pi],
\end{equation*}
where $s:\mathbb{R} \to \mathbb{R}_{+}$ is a $2\pi$-periodic smooth function. Using these coordinates, any function $\phi$ defined on $\partial D_{a}$ can be realized via the pullback as a function of $\tau$:
\begin{equation*}
\phi(s(\tau)\cos{\tau}, \, s(\tau)\sin{\tau}).
\end{equation*}
For convenience, let us use the notation $\widehat{\tau} = (\cos{\tau},\sin{\tau})$ and $\widehat{\tau}^{\perp} = (-\sin{\tau}, \cos{\tau})$.

Now let $n_{a} \in \mathbb{N}$ and let $\tau_{j} = \frac{2\pi j}{n_{a}}$, $0 \leq j \leq n_{a}-1$ be $n_{a}$ equally spaced points on the interval $[0,2\pi)$. We then use the exponential basis functions $\{e^{il\tau}\}_{l=0}^{n_{a}-1}$ for $L^{2}([0,2\pi])$ and approximate a given $\phi \in L^{2}(\partial D_{a})$ via interpolation at the points $\{\tau_{j}\}_{j=0}^{n_{a}-1} \subset [0,2\pi]$. Note that
\begin{align}
\int_{\partial D_{a}}\phi(\mathbf{y}) \frac{\partial \Phi}{\partial \nu_{\mathbf{y}}}(\mathbf{x}, \, \mathbf{y})\,dS_{\mathbf{y}} & = \int_{0}^{2\pi}\phi(s(\tau)\cos{\tau}, \, s(\tau)\sin{\tau})\frac{\partial \Phi}{\partial \nu_{y}}(\mathbf{x}, (s(\tau)\cos{\tau}, \, s(\tau)\sin{\tau})) \notag\\
& \quad \cdot \sqrt{s(\tau)^{2} + s'(\tau)^{2}}\,d\theta.
\end{align}
Furthermore, since $\left( s'(\tau)\cos{\tau} - s(\tau)\sin{\tau}, \, s(\tau)\cos{\tau} + s'(\tau)\sin{\tau}\right)$ is a tangent vector to $\partial D_{a}$, we have that
\begin{align*}
\nu(\mathbf{y}) = \nu(\tau) & = \frac{(s(\tau)\cos{\tau} + s'(\tau)\sin{\tau}, \, s(\tau)\sin{\tau} - s'(\tau)\cos{\tau})}{\sqrt{ s(\tau)^{2} + s'(\tau)^{2}}}\\
& = \frac{s(\tau)\widehat{\tau} - s'(\tau)\widehat{\tau}^{\perp}}{\sqrt{ s(\tau)^{2} + s'(\tau)^{2}}}
\end{align*}
is the unit outward normal vector to $\partial D_{a}$. It is then straightforward to compute in the case of the Helmholtz equation in 2D that
\begin{align*}
& \quad \frac{\partial \Phi}{\partial \nu_{\mathbf{y}}}(\mathbf{x}, \, (s(\tau)\cos{\tau}, \, s(\tau)\sin{\tau}))\\
& = \nabla_{y}\Phi(\mathbf{x}, (s(\tau)\cos{\tau}, \, s(\tau)\sin{\tau})) \cdot \nu(\tau)\\
& = \frac{ik}{4}H_{0}^{(1)'}(k|\mathbf{x} - s(\tau)\widehat{\tau}|)\frac{s(\tau)\widehat{\tau} - \mathbf{x}}{\sqrt{s(\tau)^{2} + |\mathbf{x}|^{2} - 2s(\tau)\mathbf{x}\cdot \widehat{\tau}}} \cdot \frac{s(\tau)\widehat{\tau} - s'(\tau)\widehat{\tau}^{\perp}}{\sqrt{s(\tau)^{2} + s'(\tau)^{2}}}.
\end{align*}

Let $n_{a} \in \mathbb{N}$ be the number of sample points on the antenna, $\partial D_{a}$, and let $n_{c} \in \mathbb{N}$ be the total number of sample points on $\partial D_{c}$. Also let $n_{R}$ be the total number of sample points on $\partial B_{R}$. We write the $2 \times (n_{c} + n_{R})$ matrix of points
\begin{equation*}
\mathbf{X} := [x_{1}, \ldots, x_{n_{c}+n_{R}}],
\end{equation*}
where each $x_{j}$ is a $2$-vector, $\{x_{j}\}_{j=1}^{n_{c}} \subset \partial D_{c}$ and $\{x_{j}\}_{j=n_{c}+1}^{n_{c} + n_{R}} \subset \partial B_{R}$. Approximations of all the integrals involved are then computed using a standard left endpoint sum with the appropriate quadrature weights. All the numerical examples presented herein take $D_{c}$ to be an annular sector parametrized by $r \in [r_{1}, r_{2}]$ and $\theta \in [\theta_{1}, \theta_{2}]$. See Figure \ref{fig:numericssetup} for details.

\begin{figure}
\centering
\def \svgwidth{0.6\linewidth}
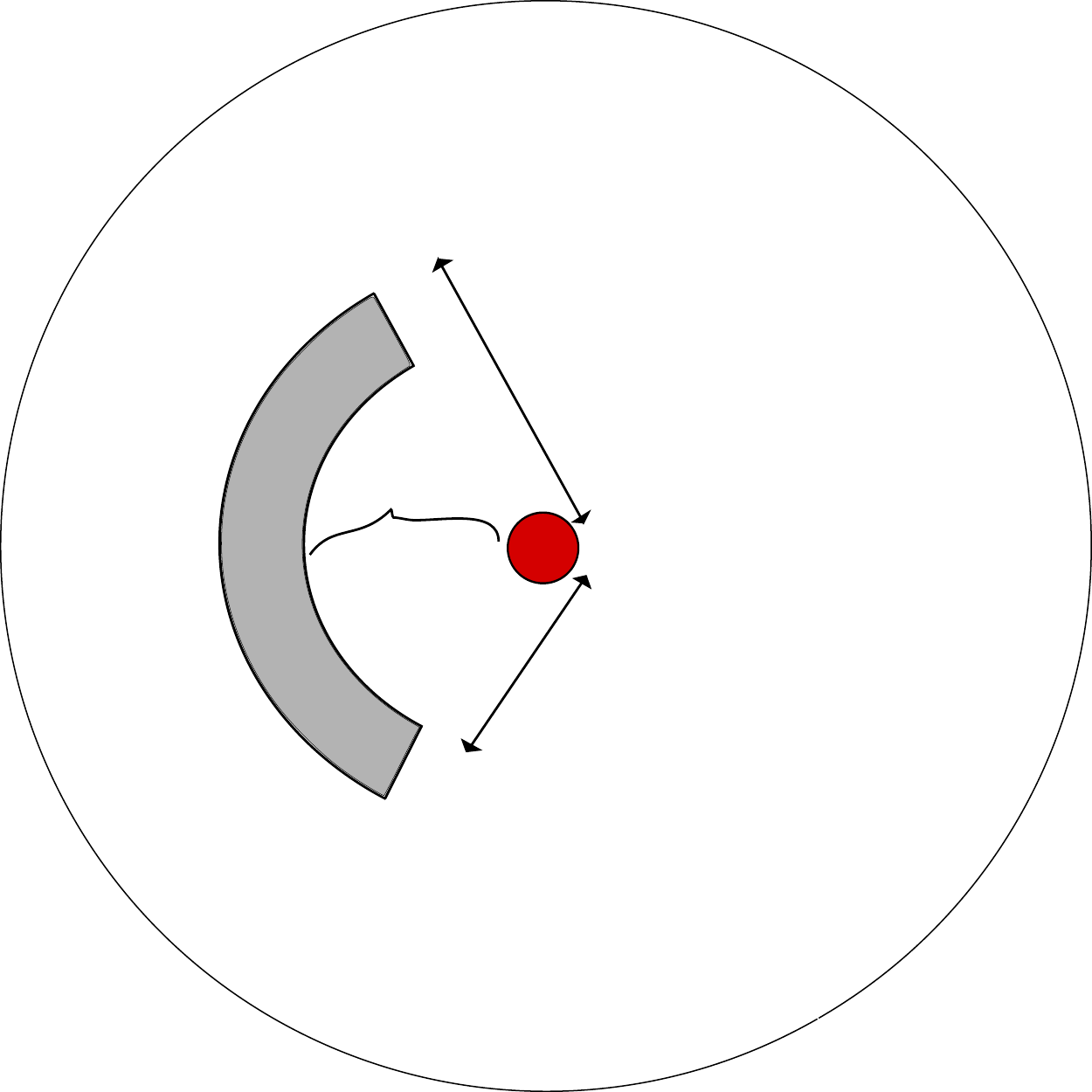
\caption{Antenna and control region geometry used for numerical experiments.}\label{fig:numericssetup}
\end{figure}

For each $1 \leq j \leq n_{c} + n_{R}$ and each $0 \leq l \leq n_{a}-1$, we compute $K[e^{il\tau}](x_{j})$ via the approximation
\begin{equation*}
\frac{2\pi}{n_{a}}\sum_{m=0}^{n_{a}-1} \frac{\partial \Phi( x_{j}, [s(\tau_{m})\cos\paren{\tau_{m}}, \, s(\tau_{m})\sin{\tau_{m} }]^{T})}{\partial \nu_{\mathbf{y}}} e^{il \tau_{m}}\sqrt{s(\tau_{m})^{2} + s'(\tau_{m})^{2}}.
\end{equation*}
If we fix $j$ and vary $l$, we see that the above sum is equivalent to computing the discrete Fourier transform of the $n_{a}$-vector
\begin{equation}
\mathbf{v}_{j} := \left[\frac{\partial \Phi( x_{j}, [s(\tau_{m})\cos{\tau_{m}}, \, s(\tau_{m})\sin{\tau_{m} }]^{T})}{\partial \nu_{\mathbf{y}}}\sqrt{s(\tau_{m})^{2} + s'(\tau_{m})^{2}}\right]_{m=0}^{n_{a}-1}, \label{eq:vj}
\end{equation}
which can be computed efficiently using the Fast Fourier Transform algorithm. In particular, for the vector $\mathbf{v}_{j}$ in \eqref{eq:vj}
\begin{equation*}
[K\{e^{il\tau}\}(x_{j})]_{l=0}^{n_{a}-1} \approx 2\pi \mathsf{FFT}(\mathbf{v}_{j}),
\end{equation*}
where $\mathsf{FFT}$ is defined on $n_{a}$-vectors $\mathbf{w} = [w_{1},\ldots, w_{n_{a}}]^{T} \in \mathbb{C}^{n_{a}}$ by
\begin{equation}
\mathsf{FFT}(\mathbf{w}) = \left[ \frac{1}{n_{a}}\sum_{j=1}^{n_{a}}w_{j}e^{\frac{2\pi i (j-1)(l-1)}{n_{a}}} \right]_{l=1}^{n_{a}}.
\label{eq:fft}
\end{equation}
So the matrix representation of $K$ is then the $n_{a} \times (n_{c} + n_{R})$ matrix
\begin{equation}
A := 2\pi [\mathsf{FFT}(\mathbf{v}_{1}), \cdots, \mathsf{FFT}(\mathbf{v}_{n_{c}+n_{R}})].
\end{equation}

Now, in order to approximately solve the ill-posed problem $K\phi = (f_{1}, f_{2})$, we attempt to solve the linear system
\begin{align*}
K_{1}\phi(x_{j}) & = f_{1}(x_{j}), \quad 1 \leq j \leq n_{c}\\
K_{2}\phi(x_{j}) & = f_{2}(x_{j}), \quad n_c + 1 \leq j \leq n_c + n_{R}.
\end{align*}
Since $A$ is computed with respect to the functions $e^{il\theta}$, we first consider the approximate coefficients of $\phi$ with respect to the finite basis $\{e^{il\tau}\}_{l=0}^{n_{a}-1}$, given by
\begin{equation}
c_{l} := \frac{1}{n_{a}}\sum_{m=0}^{n_{a}-1}e^{-il\tau_{m}}\phi(s(\tau_{m})\cos(\tau_{m}), \, s(\tau_{m})\sin(\tau_{m}))\,d\tau \approx \frac{1}{2\pi}\int_{0}^{2\pi} e^{-il\tau}\phi(s(\tau)\cos(\tau), \, s(\tau)\sin(\tau))\,d\tau.
\end{equation}
Let
\begin{equation*}
h = [c_{0}, c_{1}, \ldots, c_{n_{a} - 1}]^{T} \in \mathbb{C}^{n_{a}}.
\end{equation*}

We then numerically compute the Tikhonov regularized solution
\begin{equation*}
h_{\alpha} := (A^{*}A + \alpha I)^{-1}A^{*}f,
\end{equation*}
with $\alpha > 0$. The solution vector $h_{\alpha}$ yields the approximate coefficients $c_{l}$ of the desired density $\phi$ with respect to the functions $\{e^{il\tau}\}_{l=0}^{n_{a}-1}$. We obtain the density $\phi_{\alpha}$ corresponding to $h_{\alpha}$ sampled at the angles $\tau_{m}$ on $\partial D_{a}$ by the formula
\begin{equation*}
\phi_{\alpha}(\tau_{m}) := \sum_{l=0}^{n_{a}-1}[h_{\alpha}]_{l}e^{il\tau_{m}}.
\end{equation*}

After computing the residual $K\phi - f$ (e.g. for $\phi = \phi_{\alpha}$), we will then need to compute
\begin{equation*}
E(\phi_\alpha, f) = \frac{1}{\|f_{1}\|_{L^{2}(\partial D_{c})}^{2}}\| K_{1}\phi  - f_{1}\|_{L^{2}(\partial D_{c})}^{2} + \frac{1}{2\pi R} \|K_{2}\phi - f_{2}\|_{L^{2}(\partial B_{R})}^{2}.
\end{equation*}
Recall that the discrepancy function $F(\alpha)$ was defined by 
\begin{equation}
F(\alpha) = E(\phi_\alpha, f) - \delta^{2}, \label{eq:F}
\end{equation}
where $\delta > 0$ is a fixed error parameter. As discussed in Section \ref{sec:stability}, the mapping 
\begin{equation*}
\alpha \mapsto E(\phi_\alpha, f) 
\end{equation*}
is not globally increasing, as can be numerically demonstrated. However, for certain feasible regions of $(\alpha, \epsilon)$, $F$ is increasing. And in this case, there is a unique $\alpha_{\delta}$ such that $F(\alpha_{\delta}) = 0$. To find $\alpha_{\delta}$, we use Newton's method combined with an initial coarse line search to identify a good starting point.

First note that if we split the matrix $A$ into two blocks $A_{near}$ ($n_{c}$ by $n_{a}$) and $A_{far}$ ($n_{R}$ by $n_{a}$) so that
\begin{equation*}
A = \left[ \begin{array}{c}
A_{near}\\
A_{far}
\end{array}\right],
\end{equation*}
then $[A\phi]_{1} = A_{near}\phi$, $[A\phi]_{2} = A_{far}\phi$, and $A^{*}A = A_{near}^{*}A_{near} + A_{far}^{*}A_{far}$. In the discretized setting, instead of (\ref{eq:F1}) we take
\begin{equation}
F(\alpha) =\frac{1}{\|f_{1}\|^{2}} \|A_{near}h_{\alpha} - f_{1}\|_{L^{2}(\partial D_{c})}^{2} + \frac{1}{2\pi R}\|A_{far}h_{\alpha} - f_{2}\|_{L^{2}(\partial B_{R})}^{2} - \delta^{2}
\end{equation}
with
\begin{equation}
h_{\alpha} = (A^{*}A + \alpha I)^{-1}A^{*}f = (A_{near}^{*}A_{near} + A_{far}^{*}A_{far} + \alpha I)^{-1}\paren{A_{near}^{*}f_{1} + A_{far}^{*}f_{2}}.
\end{equation}
Then in the same spirit as that presented in \cite{CoKr-Book98} for Tikhonov regularization with respect to the standard $L^2$ norm, we compute
\begin{align}
F'(\alpha) & = \frac{-2\alpha}{\|f_{1}\|_{L^{2}(\partial D_{c})}^{2}}\re\paren{\frac{\partial h_{\alpha}}{\partial \alpha}, \, h_{\alpha}} \notag\\
& \quad + \paren{ \frac{1}{\pi R} - \frac{2}{\|f_{1}\|_{L^{2}(\partial D_{c})}^{2}}}\re\paren{\frac{\partial h_{\alpha}}{\partial \alpha}, \, A_{far}^{*}A_{far}h_{\alpha} - A_{far}^{*}f_{2}} \label{eq:dF1}\\
\frac{\partial h_{\alpha}}{\partial \alpha} & = -(A^{*}A + \alpha I)^{-1}h_{\alpha}, \label{eq:dF2}
\end{align}
where $( \cdot, \cdot )$ denotes the $L^{2}$ inner product on $\partial D_{a}$.

The function $f_{1}$ defined on $\partial D_{c}$ could be, for example, the trace of a plane wave, or of the fundamental solution to the Helmholtz equation based at some fixed point $\mathbf{x}_{0}$, i.e. a point source. For this paper, we focus on the case where $f_{1}$ is a point source and where $f_{2} \equiv 0$ on $\partial B_{R}$. A spherical point source is represented as
\begin{equation}
\frac{i}{4}H_{0}^{(1)}(k|\mathbf{x} - \mathbf{x}_{0}|), \label{eq:sphericalsource}
\end{equation}
where $\mathbf{x}_{0}$ is the source point (typically outside of $B_{R}$).

For such an $f_{1}$, there are some quantities in which we will be interested so as to determine the effectiveness of a given density $\phi$ in solving the problem $K\phi =f$. These are: the relative error of $K\phi$ on $\partial D_{c}$; the $L^{2}$ average of $K\phi$ on $\partial B_{R}$; the relative and absolute stability of $\phi$ when applying a small perturbation to $f_{1}$; the norm of $\phi$ on $\partial D_{a}$. In other words, we will measure
\begin{equation}
\frac{\|K_{1} \phi - f_{1}\|_{L^{2}(\partial D_{c})}}{\|f_{1}\|_{L^{2}(\partial D_{c})}}, \quad \frac{1}{\sqrt{2\pi R}} \|K_{2}\phi\|_{L^{2}(\partial B_{R})}, \label{eq:computevar1}
\end{equation}
\begin{equation}
\frac{\|\phi_{\alpha_{\epsilon}}- \phi_{\alpha_0}\|_{L^{2}(\partial D_{a})}}{\|\phi_{\alpha_0}\|_{L^{2}(\partial D_{a})}}, \quad \|\phi_{\alpha_\epsilon} - \phi_{\alpha_0}\|_{L^{2}(\partial D_{a})}, \label{eq:computevar2}
\end{equation}
and
\begin{equation}
\|\phi\|_{L^{2}(\partial D_{a})}, \label{eq:phinorm}
\end{equation}
where $\phi_{\alpha_\epsilon}$ is the Tikhonov regularization solution to $K\phi = (f_{1,\epsilon},0)$ with $\|f_{1} - f_{1,\epsilon}\|_{L^{2}(\partial D_{c})} = \epsilon \|f_{1}\|_{L^{2}(\partial D_{c})}$, and $\phi_{\alpha_0}$ is the solution with unperturbed $f_{1}$. The Morozov solution $\alpha_0$ and $\alpha_\epsilon$ are computed via Newton's Method using \eqref{eq:dF1} and \eqref{eq:dF2} such that 
\begin{align}
E(\phi_{\alpha_0}, f)  & = \delta^2 \notag\\
E(\phi_{\alpha_\epsilon}, f_{\epsilon}) & = \delta^2.
\end{align}
See also the discussion following \eqref{eq:F}. Recall from (\ref{random-f}), that when adding noise to the data $(f_{1}, 0)$, we choose a random perturbation $\eta \in L^{2}(\partial D_{c})$ and set
\begin{equation}
f_{1,\epsilon} = f_{1} + \epsilon \widehat{\eta} \|f_{1}\|_{L^{2}(\partial D_{c})}, \label{eq:noiseterm}
\end{equation}
where $\epsilon > 0$ represents the relative percentage of noise added. In the discrete case, the noise is chosen to be a complex $n_c$-vector $\nu$ whose real and imaginary components are pseudorandom numbers (we used uniformly distributed noise, but any distribution would yield similar results) on the interval $(-1,1)$. Furthermore, for reproducibility, whenever generating $\nu$ using a pseudorandom number generator, we always reset the seed to the same value.

\subsection{Parameters Used for Numerical Experiments\label{subsec:parametersetup}}
Here we describe some of the parameters used for the various numerical experiments presented. In Section \ref{subsec:sphericalsource} we always assume that $\partial D_{a}$ is a circle with radius given by $a = 0.01$, and that $\partial D_{c}$ is a sector of an annulus with $\theta_{1} = 3\pi/4$ and $\theta_{2} = 5\pi/4$. We also take $R = 10$ in all computational examples. We remark that we always restrict the distance from $D_{c}$ to $D_{a}$ to be no smaller than $10^{-3}$ due to the numerical limitations of our approach. This is due to the fact that $K\phi$ is a singular integral when evaluating at points very near to $\partial D_{a}$. Therefore, at points on $\partial D_{c}$ that are near $\partial D_{a}$ the limitations of machine precision become more and more apparent. Numerically, we observed that our direct approach starts to break down near $d = \mathrm{dist}(\partial D_{c}, \partial D_{a}) = 10^{-4}$. However, we stress that one could most likely obtain high accuracy in computing $K\phi$ for $d \leq 10^{-4}$ by using the Nystr\"om method as discussed in \cite{Kress-Book99}.

For the collocation method, we use $n_{a} = 256$ sample points on $\partial D_{a}$, and $n_{\mathrm{arc}_{1}} = 256$ points on the inner arc of $\partial D_{c}$, with the remaining points chosen so as to keep the quadrature weights approximately constant. Thus for a very thin region, $n_{c} \approx 512$. We also take $n_{R} = 256$ (number of sample points on $\partial B_{R}$). Note that increasing $n_{c}$ or $n_{R}$ will put more emphasis on matching $f$ on $\partial D_{c}$ or $\partial B_{R}$, respectively. The discrepancy parameter $\delta$ used for Tikhonov regularization will typically be fixed at $0.02$. The key variables under consideration are $d = r_{1} - a$, $k$, and $\epsilon$ (perturbation parameter for adding noise to $f_{1}$). All of the plots presented in the following sections involve varying two of the aforementioned parameters and plotting different quantities of interest, as stated in (\ref{eq:computevar1})-(\ref{eq:phinorm}).

When evaluating the relative change in $\phi$ given a perturbation of $f_{1}$, denoted by $f_{\epsilon, 1}$, we remark that for the parameter choices we used, a $0.5\%$ change ($\epsilon = 0.005$) in $f_{1}$ yielded a roughly $5\%$ change in $\phi$. However, one must keep in mind that this depends quite a lot on the parameters used. In particular, setting the discrepancy $\delta = 0.02$ in all the simulations had an important effect on the numerical results. If we had used $\delta = 0.05$ instead (which leads to approximately a $5 \%$ mismatch on the region $\partial D_{c}$), then the relative change in $\phi$ given $\epsilon = 0.005$ would be noticeably smaller. So ultimately there is a tradeoff between $\mu$, $R$, $k$, $\delta$, and $\epsilon$ which is not entirely trivial, but this still can be examined experimentally as we have done.

\subsection{Near field stability\label{subsec:nearfieldstability}}
We present below Figure \ref{fig:singularvalues}, which shows how the first $50$ singular values of the operator $K = (K_{1}, K_{2})$ vary with $d$. It is clear that for $d$ small (i.e. for control in the nearfield of the antenna), the rate of decay of the singular values of $K$ is considerably slower than for larger $d$. This in turn provides some experimental evidence for the fact that nearfield control seems to be more feasible in terms of stable dependence of the solution $\phi$ on $f$. We also show Figure \ref{fig:singularvaluesdiff}, which shows the behavior of the first and sixth singular value of $K$ with respect to $d$ and $k$.

\begin{figure}
\includegraphics[width=\textwidth]{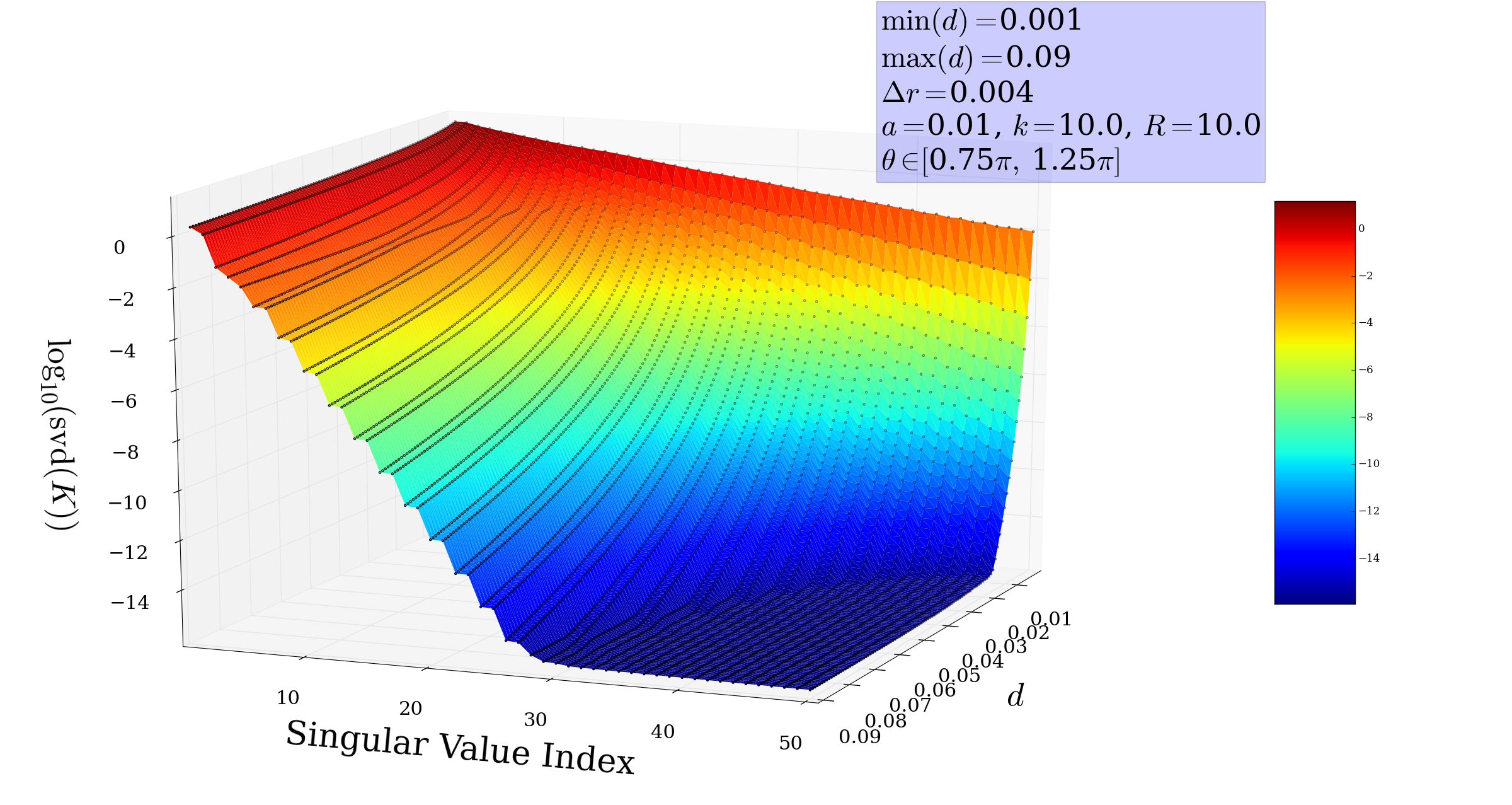}
\caption{Plot of first $50$ singular values of $K:L^{2}(\partial D_{a}) \to \Xi$ for $\partial D_{a}$ a circular antenna of radius $a = 0.01$ and $\partial D_{c}$ an annular region of varying distance from $\partial D_{a}$.}\label{fig:singularvalues}
\end{figure}

\begin{figure}
\includegraphics[width=\textwidth]{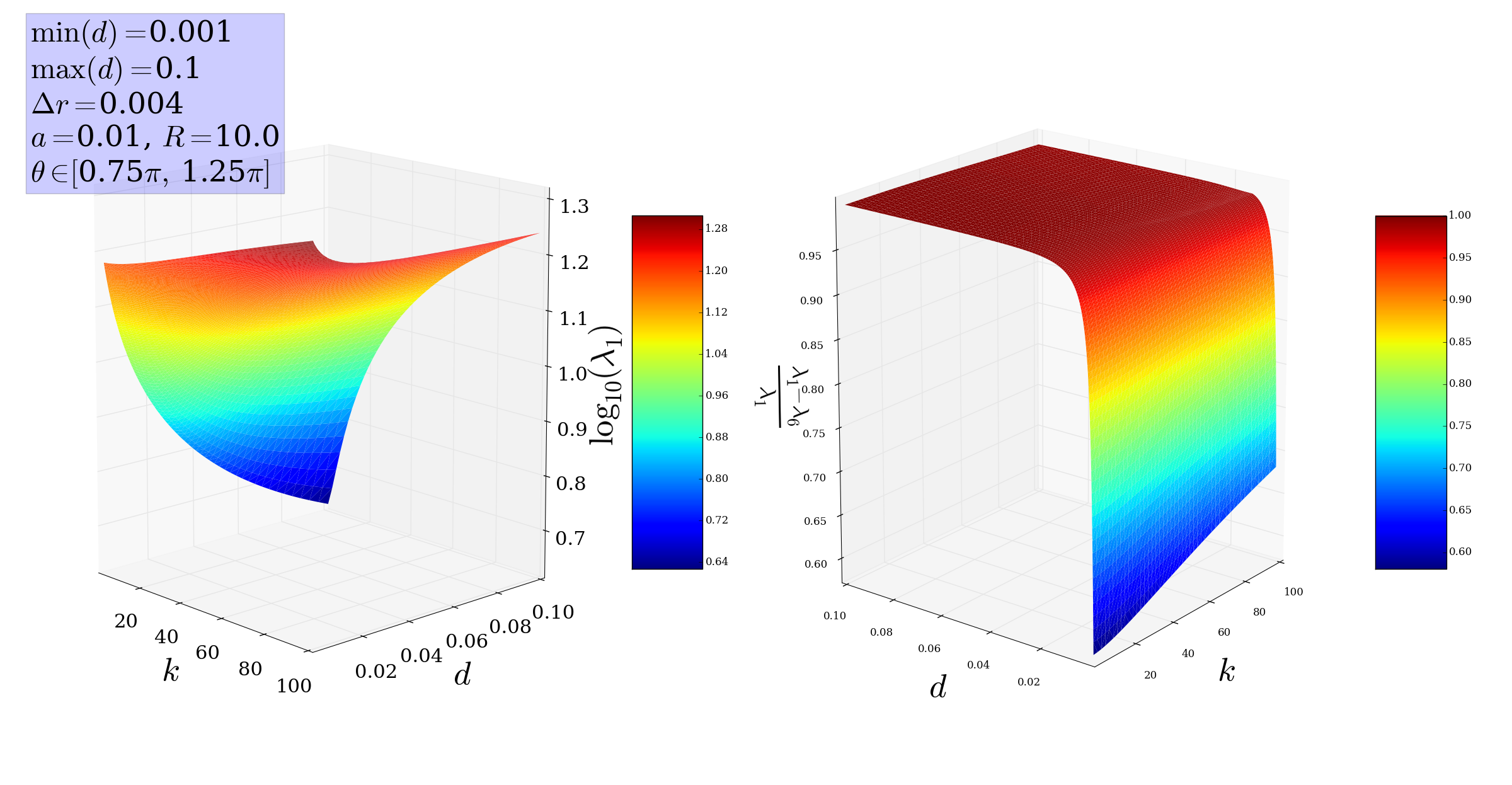}
\caption{Plot of first singular value of $K:L^{2}(\partial D_{a}) \to \Xi$ as well as the relative difference of the first and sixth singular values with respect to $d$ and $k$. Again, $\partial D_{a}$ is a circular antenna of radius $a = 0.01$ and $\partial D_{c}$ an annular region.}\label{fig:singularvaluesdiff}
\end{figure}

\subsection{Control for a Spherical Point Source\label{subsec:sphericalsource}}
We now consider the case that
\begin{equation*}
f_{1}(\mathbf{x}) = \frac{i}{4}H_{0}^{(1)}(k|\mathbf{x} - \mathbf{x}_{0}|),
\end{equation*}
where $\mathbf{x}_{0}$ is the source point. In all examples presented in this section, we have $R = 10$ unless otherwise noted, and $\mathbf{x}_{0} = [20, 0]^{T}$ or $\mathbf{x}_{0} = [10000, 0]^{T}$ (to approximate a source at infinity).

First we observe how the frequency $k$ and distance $d$ from $\partial D_{c}$ to $\partial D_{a}$ affects the various control criteria. In figures \ref{fig:besselFS_epsilon_0005_delta_002_vary_k_01_to_100_mu_0001_to_003_data} and \ref{fig:besselNS_epsilon_0005_delta_002_vary_k_01_to_100_mu_0001_to_003_data} we vary $k$ from $0.1$ to $100$ and $d$ from $0.001$ to $0.003$ with $a = 0.01$. With the error discrepancy set at $\delta = 0.02$, we have in both figures that relative error on $\partial D_{c}$ is very close to $2\%$ for all data points. Moreover, with $0.5\%$ noise added to $f$, roughly a $5\%$ change in $\phi$ is observed at all frequencies when $d$ is near its lower limit. A bit more sensitivity is observed for frequencies $k < 20$ when $d$ increases beyond $0.01$. Interestingly, for $k > 20$ the optimal parameter $\alpha$ is larger and corresponding power budget smaller in order to achieve discrepancy $\delta$.

\begin{figure}
\includegraphics[trim = 15 0 40 0, clip = true, width=\textwidth]{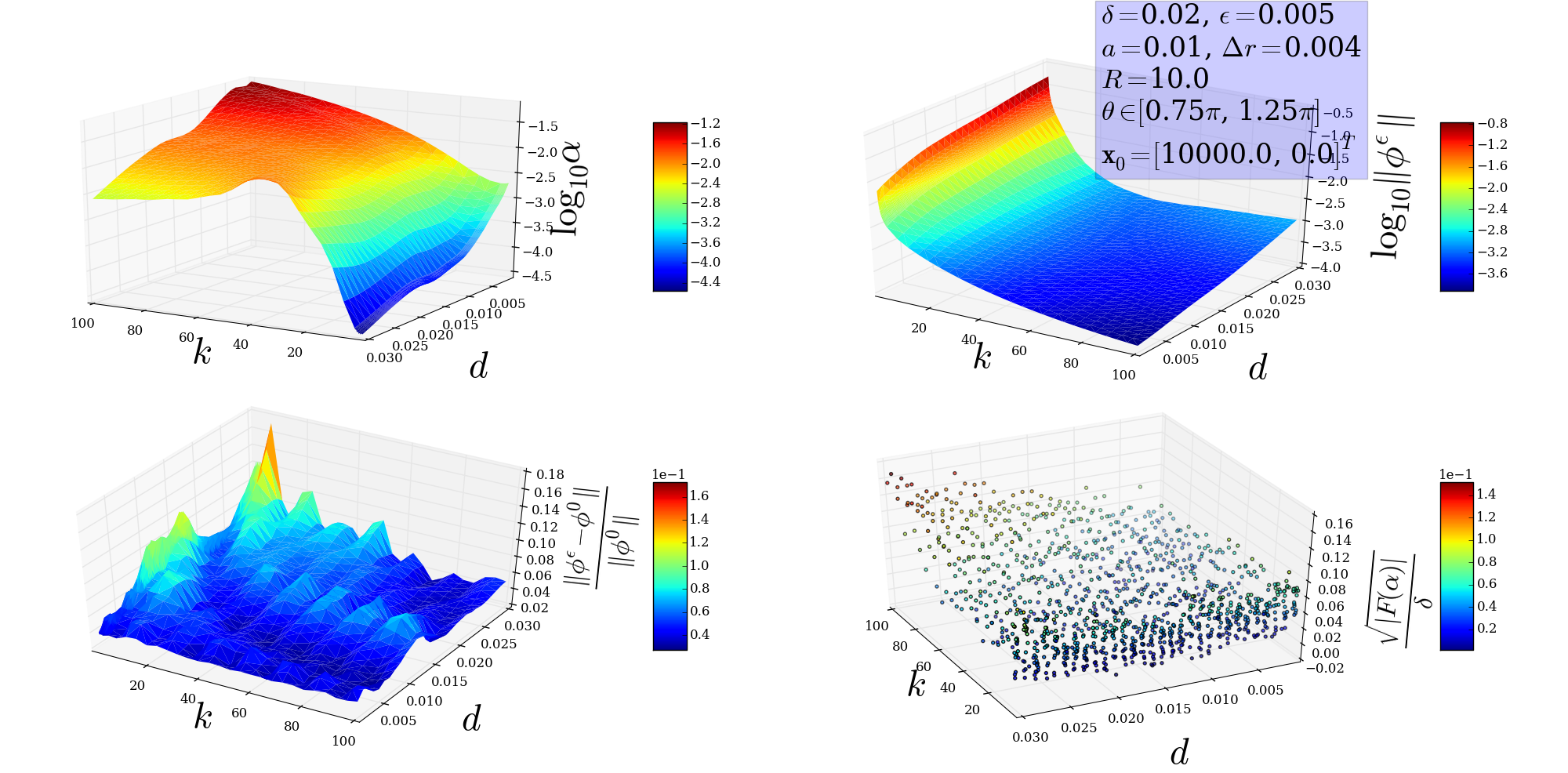}
\caption{Plot vs. $k$ and  $d$ for $f_{1}$ a spherical point source at $\mathbf{x}_{0} = [10000,\, 0]^{T}$.}\label{fig:besselFS_epsilon_0005_delta_002_vary_k_01_to_100_mu_0001_to_003_data}
\end{figure}
\begin{figure}
\includegraphics[trim = 15 0 40 0, clip = true, width=\textwidth]{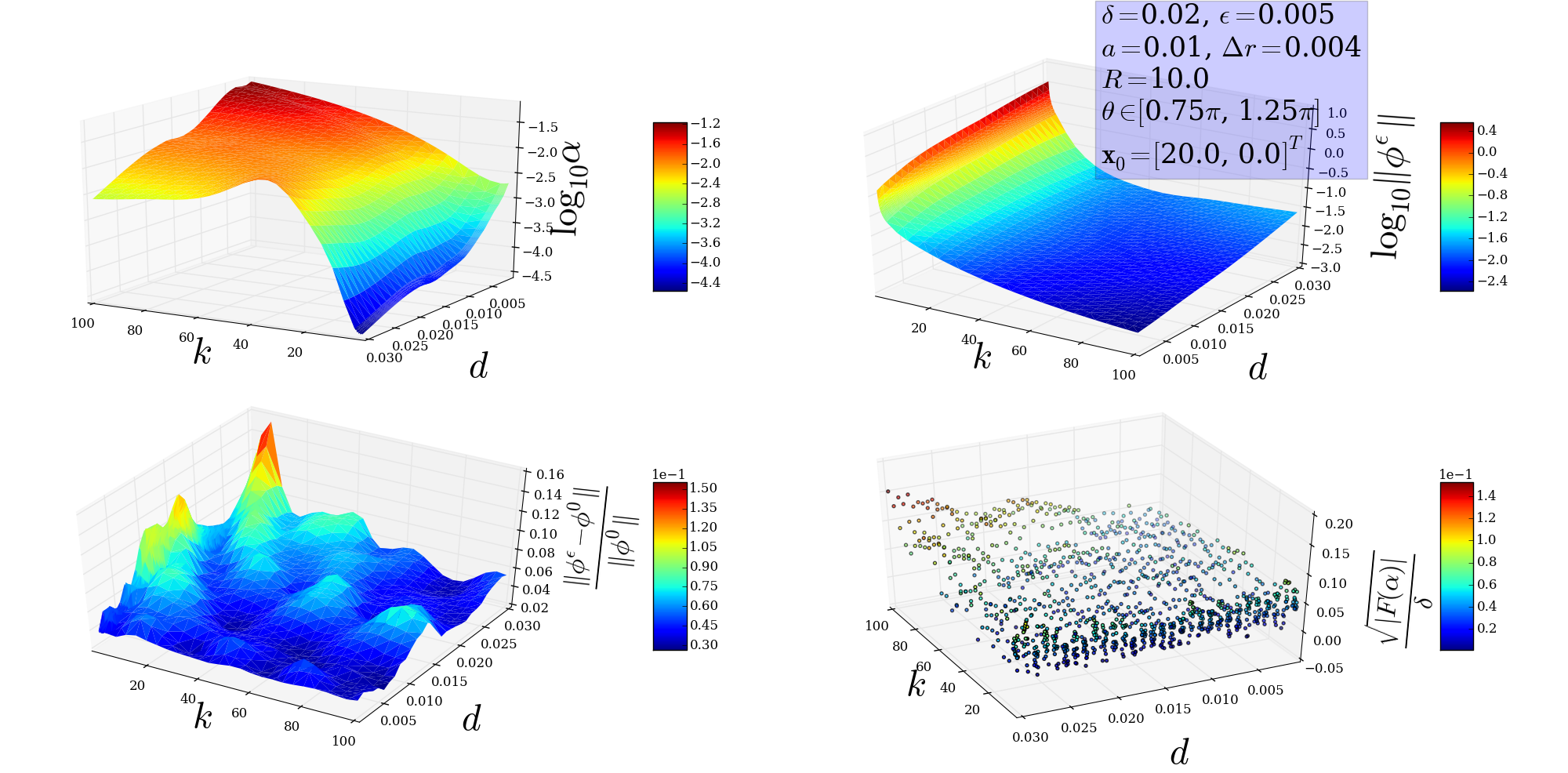}
\caption{Plot vs. $k$ and  $d$ for $f_{1}$ a spherical point source at $\mathbf{x}_{0} = [20,\, 0]^{T}$.}\label{fig:besselNS_epsilon_0005_delta_002_vary_k_01_to_100_mu_0001_to_003_data}
\end{figure}
In figures \ref{fig:besselFS_epsilon_0005_delta_002_vary_k_01_to_100_mu_0001_to_003_data} and \ref{fig:besselNS_epsilon_0005_delta_002_vary_k_01_to_100_mu_0001_to_003_data} it is clear that for smaller $d$ values the sensitivity of $\phi$ to $0.5\%$ noise added to $f$ is close to $5\%$. As $d$ increases, sensitivity of $\phi$ to noise increases as expected. Having $\mathbf{x}_{0}$ nearer or farther from $\partial B_{R}$ does not have a very significant effect on the overall shape of each subplot.

\begin{figure}
\includegraphics[trim = 15 0 40 0, clip = true, width=\textwidth]{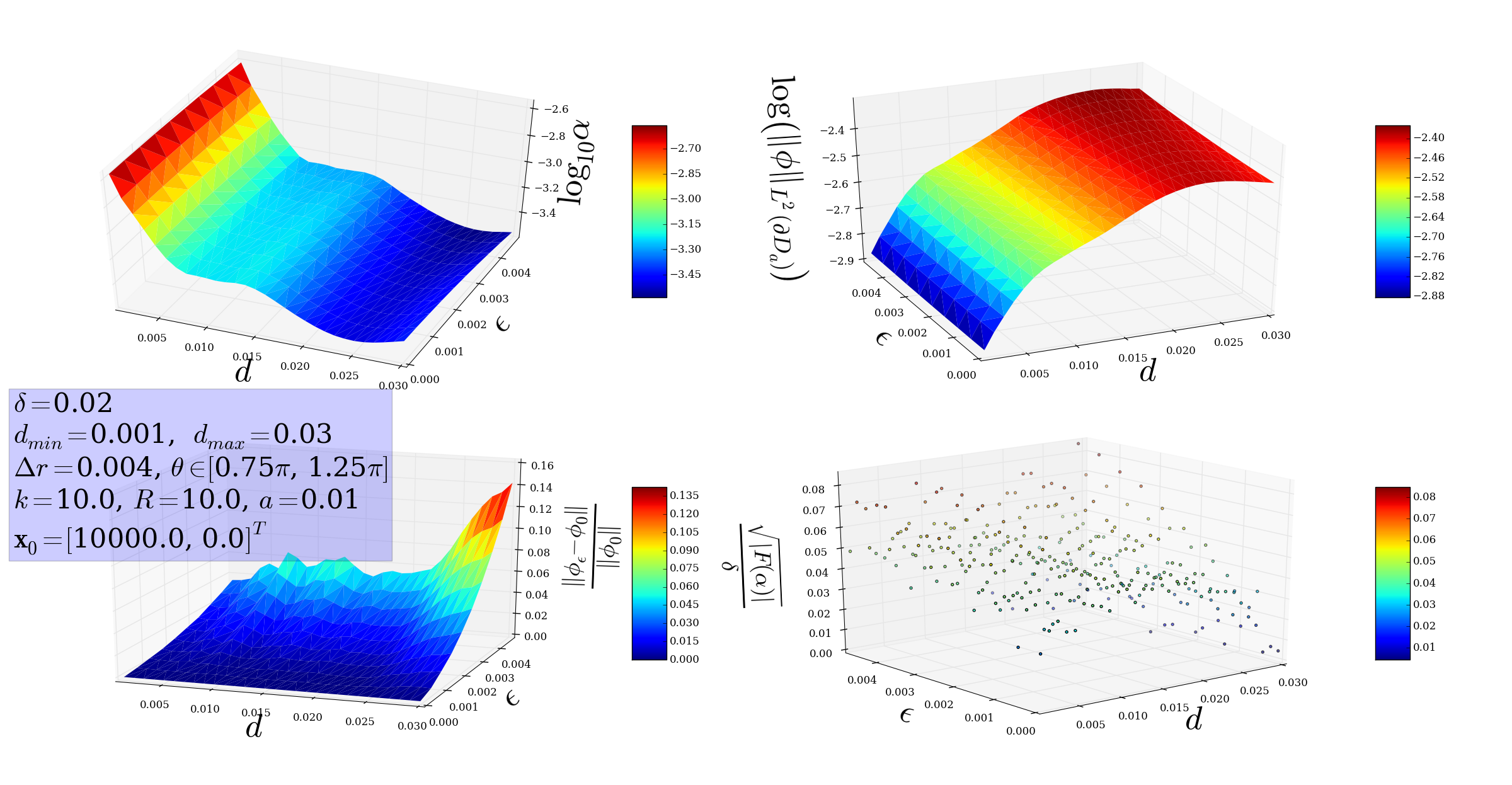}
\caption{Plot vs. $d$ and  $\epsilon$ for $f_{1}$ a spherical point source at $\mathbf{x}_{0} = [10000,\, 0]^{T}$.}\label{fig:besselFS_vary_mu_epsilon}
\end{figure}
\begin{figure}
\includegraphics[trim = 15 0 40 0, clip = true, width=\textwidth]{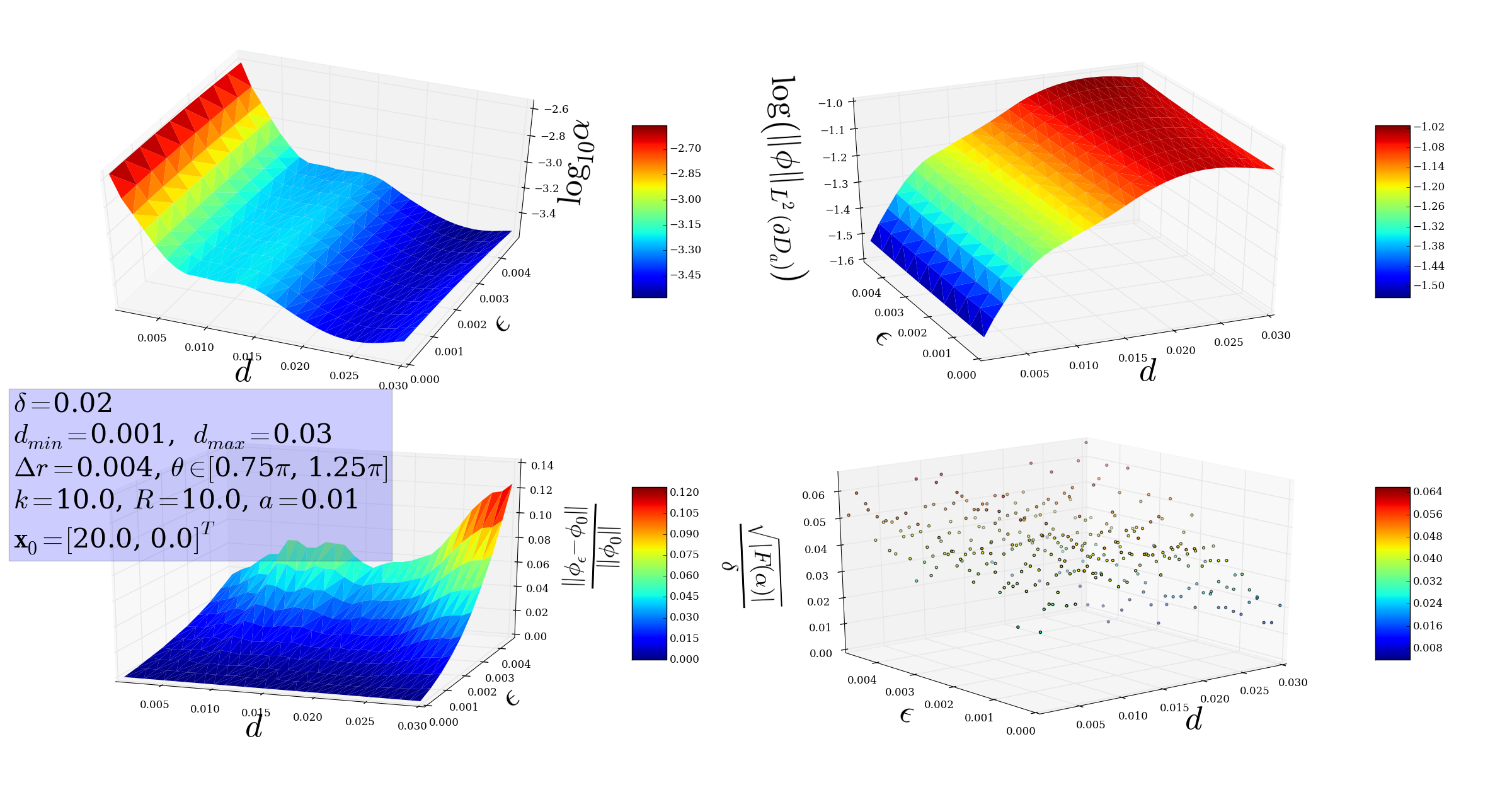}
\caption{Plot vs. $d$ and  $\epsilon$ for $f_{1}$ a spherical point source at $\mathbf{x}_{0} = [20,\, 0]^{T}$.}\label{fig:besselNS_vary_mu_epsilon}
\end{figure}
Figures \ref{fig:besselFS_vary_mu_epsilon} and \ref{fig:besselNS_vary_mu_epsilon} show how the quantities of interest change with $d$ and the noise factor $\epsilon$, both with $k = 10$. The reason for choosing $k=10$ instead of, e.g., $k=1$ is that from figure \ref{fig:besselFS_epsilon_0005_delta_002_vary_k_01_to_100_mu_0001_to_003_data} we see a slightly higher sensitivity of $\phi$ to noise for approximately $1 < k < 20$ when $d$ starts to increase. So the goal was to capture the worst case scenario for the control stability. For smaller values of $d$ we see as before that a roughly $0.5\%$ change in $f_{1}$ yields about a $5\%$ change in $\phi$. Moreover, the dependence on $\epsilon$ for fixed $d$ is superlinear, consistent with the illposedness of the problem. Interestingly, sensitivity of $\phi$ at $d \approx 0.015$ is better than at nearby values, but of course such a value depends on the other parameters of the problem setup.
 
\begin{figure}
\includegraphics[trim = 10 20 20 0, clip = true, width=\textwidth]{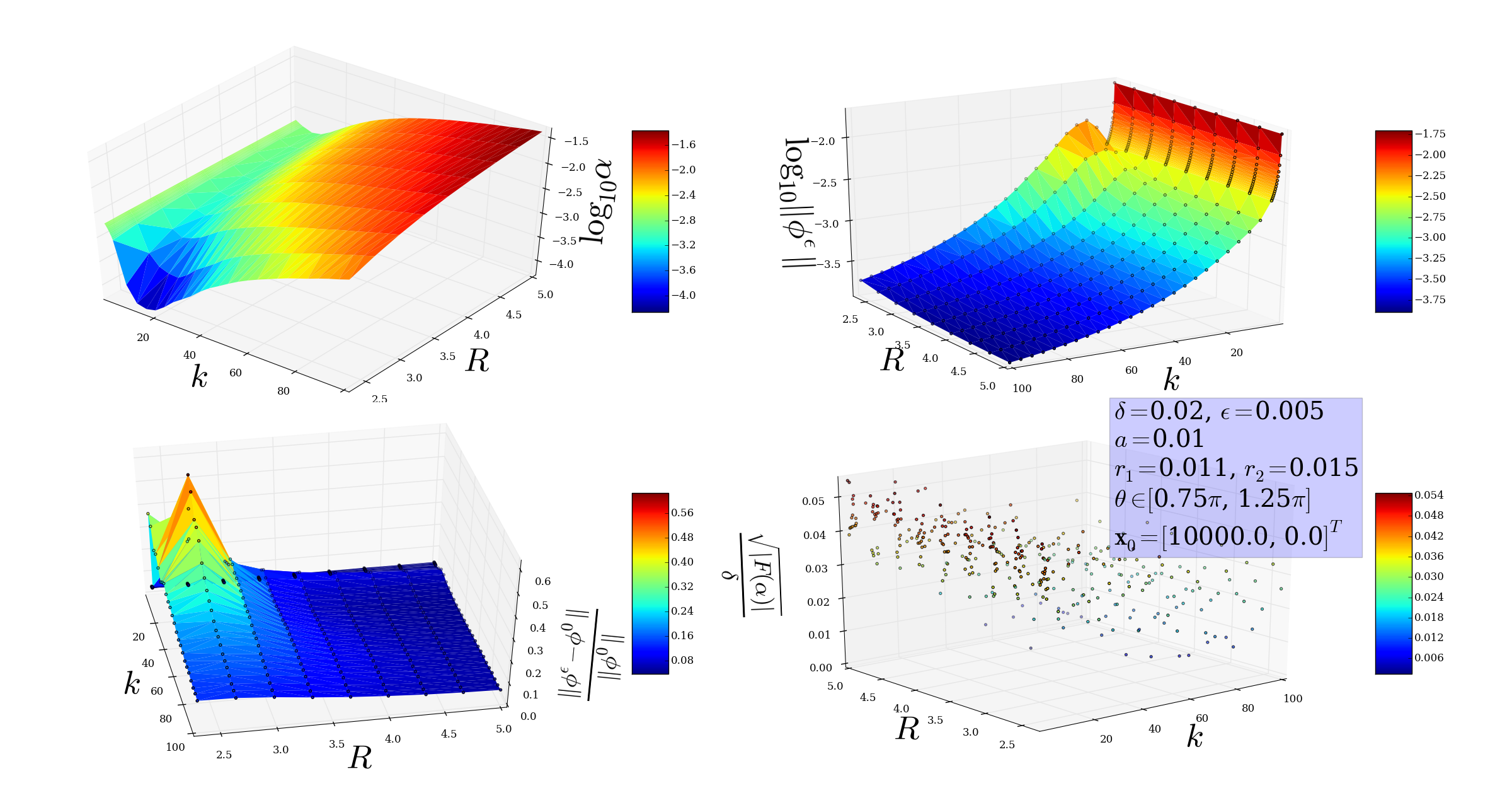}
\caption{Plot vs. $k$ and  $R$ for $f_{1}$ a spherical point source at $\mathbf{x}_{0} = [10000,\, 0]^{T}$.}\label{fig:besselFS_vary_k_R}
\end{figure}
Finally, we consider Figure \ref{fig:besselFS_vary_k_R}, which shows the dependence on $R$ and $k$ for a source at $\mathbf{x}_{0} = [10000,\, 0]^{T}$. Overall, one can see that $R$ can be decreased to around $R = 3$ at any frequency between $0.1$ and $100$ and still achieve the same approximate level of sensitivity for $\phi$ as in the previous plots with $R = 10$.

\section{Conclusions and Future Work}
\label{sec:conclusions}
In this paper we studied the feasibility of the active control scheme for the scalar Helmholtz equation. In the $L^2$ setting, we presented analytic conditional stability results as well as detailed numerical sensitivity studies for the minimal energy solution. We provided several analytic and numerical arguments for the scheme's feasibility and broadband character in the near field when the interrogating field is a far field of a far field observer.

We focused our discussion in this paper only on the case of an interrogating far field point source (i.e. similar to a plane wave with corresponding decay) because we believe that this situation is relevant in usual radar or sonar detection problems. In contrast, the case of an interrogating plane wave corresponds to a different problem, where the observer is close to the source and control region and thus the interrogating signal does not have sufficient decay.

In fact, we have numerically studied the case when the interrogating field is a plane wave without decay or a given uniform field. We observed the scheme does not behave well for the uniform field and that although the stability and accuracy of the near field scheme are essentially independent of the plane wave direction, the overall performance of the scheme is not as good when compared to the case of an interrogating signal coming from a far field observer presented above. In fact, for the same settings as in Figure \ref{fig:nonincreasing_F} when comparing the case of an interrogating far field point source with an interrogating plane wave, we obtained $5\%$ versus $8\%$ stability error and power budget levels of $\approx 10^{-1}$ versus $\approx 10$. We conclude that the scheme performance depends not only on the location of the control region with respect to the source region but also on the amplitude and oscillatory pattern of the incoming field. 

Currently we are considering a more localized basis for $L^2(\partial D_a)$ (e.g. delta function basis, or splines) in order to better observe the field characteristics in the control region $D_c$ and around the antenna $D_a$. We also plan to study the active control scheme for linear arrays and for large elongated antennas. Then, as a next step in our research efforts, we will work on the extension of the current numerical sensitivity study to three dimensions and full Maxwell system and on the study of near field control with planar and conformal arrays.

\bibliographystyle{plain}
\bibliography{controlpaper}

\end{document}